%%%%%%%%%%%%%%%%%%%%%%%%%%%%%%%%%%%%%%%%%%%%%%%%%%%%%%%%%%%%%%%%%
% This template lays out the Epiga style
%%%%%%%%%%%%%%%%%%%%%%%%%%%%%%%%%%%%%%%%%%%%%%%%%%%%%%%%%%%%%%%%%
\documentclass[11pt]{amsart}

\usepackage{epigamath}

\usepackage[english]{babel}
\usepackage[all]{xy}
\usepackage{mathabx}
\usepackage{tikz}
\usepackage{tikz-cd}

%%%%%%%%%%%%%%%%%%%%%%%%%%%%%%%%%%%%%
% Specify the numbering of equations
%%%%%%%%%%%%%%%%%%%%%%%%%%%%%%%%%%%%%

\numberwithin{equation}{section}

%%%%%%%%%%%%%%%%%%%%%%%%%%%%%%%%%%%%%
% Specify any other package you need
%%%%%%%%%%%%%%%%%%%%%%%%%%%%%%%%%%%%%

\usepackage{enumitem}

%%%%%%%%%%%%%%%%%%%%%%%%%%%%%%%%%%%%%
% Define any new environment you need
%%%%%%%%%%%%%%%%%%%%%%%%%%%%%%%%%%%%%

\newtheorem{theorem}{Theorem}[section]
\newtheorem{corollary}[theorem]{Corollary}
\newtheorem{lemma}[theorem]{Lemma}
\newtheorem{proposition}[theorem]{Proposition}
\newtheorem{proposition-definition}[theorem]{Proposition-Definition}

\theoremstyle{definition}

\newtheorem{remark}[theorem]{Remark}

%%%%%%%%%%%%%%%%%%%%%%%%%%%%%%%%%%%%%
% Define any new command you need
%%%%%%%%%%%%%%%%%%%%%%%%%%%%%%%%%%%%%

\DeclareMathOperator{\Jac}{Jac}

\newcommand{\BBB}{\mathrm{(BBB)}}
\newcommand{\BAA}{\mathrm{(BAA)}}

\newcommand{\Tot}{\mathrm{Tot}}

\newcommand{\supp}{\mathrm{supp}}
\newcommand{\End}{\mathrm{End}}
\newcommand{\Ext}{\mathrm{Ext}}
\newcommand{\pr}{\mathrm{pr}}

\newcommand{\st}{\mathrm{st}}

\newcommand{\gr}{\mathrm{gr}}
\newcommand{\Id}{\mathrm{1}}
\newcommand{\im}{\mathrm{Im} \,}
\newcommand{\rk}{\mathrm{rk}}
\newcommand{\Tors}{\mathrm{Tors}}

\newcommand{\Coh}{\mathrm{Coh}}
\newcommand{\tr}{\mathrm{tr}}

\newcommand{\ii}{\,\mathrm{i}\,}

\newcommand{\GL}{\mathrm{GL}}

\newcommand{\U}{\mathrm{U}}

\newcommand{\coker}{\mathrm{coker}}

\newcommand{\Un}{\mathrm{U}}
\newcommand{\WIT}{\mathrm{Wit}}
\newcommand{\WWIT}{\mathbf{Wit}}
\newcommand{\wWIT}{\mathfrak{Wit}}

\newcommand{\Aa}{\mathcal{A}}
\newcommand{\Bb}{\mathcal{B}}

\newcommand{\Dd}{\mathcal{D}}
\newcommand{\Ee}{\mathcal{E}}
\newcommand{\Ff}{\mathcal{F}}
\newcommand{\Gg}{\mathcal{G}}

\newcommand{\Ll}{\mathcal{L}}

\newcommand{\Oo}{\mathcal{O}}
\newcommand{\Pp}{\mathcal{P}}

\newcommand{\Ss}{\mathcal{S}}

\renewcommand{\D}{\mathbf{D}}
\newcommand{\E}{\mathbf{E}}
\newcommand{\F}{\mathbf{F}}

\renewcommand{\P}{\mathbf{P}}

\renewcommand{\U}{\mathbf{U}}

\newcommand{\M}{\mathrm{M}}

\newcommand{\Ddd}{\mathfrak{D}}
\newcommand{\Eee}{\mathfrak{E}}

\newcommand{\Ggg}{\mathfrak{G}}

\newcommand{\DDd}{\mathscr{D}}

\newcommand{\FFf}{\mathscr{F}}

%%%%Calligraphic functions
\newcommand{\ol}[1]{\overline{#1}}
\newcommand{\ul}[1]{\underline{#1}}

\newcommand{\wt}[1]{\widetilde{#1}}

%Cameral covers

\newcommand{\fF}{\mathfrak{f}}

\newcommand{\iI}{\mathfrak{i}}

\newcommand{\CC}{\mathbb{C}}

\newcommand{\EE}{\mathbb{E}}
\newcommand{\RR}{\mathbb{R}}

\newcommand{\HH}{\mathbb{H}}

\newcommand{\morph}[6]{\begin{array}{cccc} #6: & #1  & \stackrel{#5}{\longrightarrow} &  #2  \\ & #3 & \longmapsto & #4  \end{array}}

%%%%%%%%%%%%%%%%%%%%%%%%%%%%%%%%%%%%%%%%%%%%%%%%%%%%%%%%%%%%%%%%%
% Volume, dates, title, author(s), abstract, keywords, MSC class
%%%%%%%%%%%%%%%%%%%%%%%%%%%%%%%%%%%%%%%%%%%%%%%%%%%%%%%%%%%%%%%%%

\EpigaVolumeYear{6}{2022}
\EpigaArticleNr{4}
\ReceivedOn{June 29, 2020}
\InFinalFormOn{December 14, 2021}
\AcceptedOn{December 31, 2021}

\title{Mirror symmetry for Nahm branes}
\titlemark{Mirror symmetry for Nahm branes}

\author{Emilio Franco}
\address{%
  Centro de An\'alise Matem\'atica,
  Geometria e Sistemas Din\^{a}micos, 
  Instituto Superior T\'ecnico,
  Universidade de Lisboa, 
  Av. Rovisco Pais s/n, 1049-001 Lisboa, Portugal
}
\email{emilio.franco@tecnico.ulisboa.pt}

\author{Marcos Jardim}
\address{Instituto de Matem\'atica, Estat\'istica e Computa\c{c}\~ao Cien\-t\'i\-fi\-ca, Universidade Estadual de Campinas, Rua S\'ergio Buarque de Holanda, 651, 13083-859 Campinas/SP (Brazil)}
\email{jardim@ime.unicamp.br}

\authormark{E.~Franco and M.~Jardim} 

\AbstractInEnglish{%
  The Dirac--Higgs bundle is a hyperholomorphic bundle over the moduli
  space of stable Higgs bundles of coprime rank and degree. We provide
  an algebraic generalization to the case of trivial degree and the
  rank higher than $1$. This allow us to generalize to this case the
  Nahm transform defined by Frejlich and the second named author,
  which, out of a stable Higgs bundle, produces a vector bundle with
  connection over the moduli space of rank $1$ Higgs bundles. By
  performing the higher rank Nahm transform we obtain a
  hyperholomorphic bundle with connection over the moduli space of
  stable Higgs bundles of rank $n$ and degree $0$, twisted by the
  gerbe of liftings of the projective universal bundle.

  Such hyperholomorphic vector bundles over the moduli space of stable
  Higgs bundles can be seen, in the physicist's language, as
  $\BBB$-branes twisted by the above mentioned gerbe. We refer to these
  objects as Nahm branes. Finally, we study the behaviour of Nahm branes
  under Fourier--Mukai transform over the smooth locus of the Hitchin
  fibration, checking that the resulting objects are supported on a
  Lagrangian multisection of the Hitchin fibration, so they describe
  partial data of $\BAA$-branes.}

\MSCclass{14D20, 14H60, 14J33}

\KeyWords{Higgs bundles, mirror symmetry, Fourier--Mukai transform,
  Nahm transform}

%%%%%%%%%%%%%%%%%
% Thanks (if any)
%%%%%%%%%%%%%%%%%

\acknowledgement{EF is currently supported by the Scientific
  Employment Stimulus program, fellowship reference CEECIND/04153/2017,
  funded by FCT (Portugal). He is partially supported by FCT/Portugal
  through CAMGSD, IST-ID, projects UIDB/04459/2020 and UIDP/04459/2020.
  He was previously supported by the project PTDC/MAT- GEO/2823/2014
  also funded by FCT (Portugal) with national funds, and by the FAPESP
  postdoctoral grant number 2012/16356-6. MJ is partially supported by
  the CNPq grant number 302889/2018-3, the FAPESP Thematic Project
  2018/21391-1; the present work was also partially funded by the FAPESP
  grant number 2016/03759-6, which supported the second named author
  during his visit to University of Edinburgh in 2017, where part of
  this research was carried out.
}

%%%%%%%%%%%%%%%%%
% Dedication, contribution
%%%%%%%%%%%%%%%%%

%\contribution{With an appendix by...}

%\dedication{Dedicated to...}

\begin{document}

%%%%%%%%%%%%%%%%%%%%%%%%%%%%%%%
% Title page
%%%%%%%%%%%%%%%%%%%%%%%%%%%%%%%

\removeabove{.5cm}
\removebetween{.5cm}
\removebelow{.5cm}

\maketitle

\begin{prelims}

\DisplayAbstractInEnglish

\bigskip

\DisplayKeyWords

\medskip

\DisplayMSCclass
\end{prelims}

%%%%%%%%%%%%%%%%%%%%%
% Table of Contents
%%%%%%%%%%%%%%%%%%%%%

\newpage

\setcounter{tocdepth}{1}

\tableofcontents

%%%%%%%%%%%%%%%%%%%%%
% Content begins here
%%%%%%%%%%%%%%%%%%%%%

\section{Introduction}

\subsection{Context}

The Nahm transform was originally described as a geometrical correspondence between solutions of the self-duality Yang--Mills equations (also known as instantons) in $\RR^4$ which are invariant under dual groups of translations \cite{ADHM, Nahm_1, Nahm_2, Hitchin_construction_monopoles, corrigan&goddard, Braam&vanBaal, Nakajima, jardim_2}. In \cite{jardim_survey}, the second-named author reviewed the Nahm transform in several situations and gave an interpretation as a nonlinear version of the Fourier transform which, given a family of self-dual connections over a spin four-manifold with non-negative scalar curvature, produces a vector bundle with connection over the parametrizing space of the family. Such bundle is constructed by considering, at each point of the parametrizing space, the cokernel of the associated Dirac operator. The connection is hyperk\"ahler whenever both varieties, the base manifold and the parametrizing space, are hyperk\"ahler.

The study of instantons that are invariant under translations in two directions led Hitchin to introduce Higgs bundles in \cite{hitchin-self} as solutions of the dimensional reduction to a Riemann surface of the self-dual connections in $4$ dimensions. It turns out that the moduli space $\M$ of Higgs bundles has a very rich geometry; in particular, it can be constructed as a hyperk\"ahler quotient in the context of gauge theory \cite{hitchin-self, simpson1, simpson2, donaldson, corlette} inheriting a hyperk\"ahler structure. When the rank and the degree are coprime, all semistable Higgs bundles are stable, and Hausel \cite{hausel} showed that a universal bundle exists. In degree $0$, however, there is no universal bundle over the stable locus $\M^\st$ or even over any other Zariski open set of the moduli space (see Ramanan \cite{ramanan} and Drezet and Narasimhan \cite{drezet&narasimhan} for a proof in the case of vector bundles that extends naturally to Higgs bundles). There exists nonetheless a local universal bundle in the \'etale topology of the moduli space of stable Higgs bundles $\M^\st$, as indicated by Simpson \cite{simpson2}.

Another important feature of the moduli space $\M$ of stable Higgs bundles with coprime rank and degree is the existence of the so called \emph{Dirac--Higgs bundle}, originally considered by Hitchin and studied in detail by Hausel in \cite{hausel}. Later, Blaavand \cite{blaavand} extended the construction of the Dirac--Higgs bundle to the moduli space of parabolic Higgs bundles.

One way to describe the Dirac--Higgs bundle is as the hyperholomorphic bundle on $\M$ obtained via the Nahm transform, as defined in \cite{frejlich&jardim}, associated to the universal bundle.  To be more precise, the Nahm transform of a Higgs bundle is defined by considering the index bundle associated to the family obtained by twisting the original Higgs bundle with the universal family of rank 1 Higgs bundles. This transform underlies the Fourier--Mukai transform for Higgs bundles defined by Bonsdorff \cite{bonsdorff_1}, which also equipped it with an autodual connection \cite{bonsdorff_2}.

Another interesting feature of the moduli space $\M$ of Higgs bundles is that it admits a fibration $\M \to B$ over a vector space, becoming an algebraically completely integrable system \cite{hitchin_duke} which is known as the {\it Hitchin system}. The notion of Higgs bundles generalizes naturally to any structure group $G$. It was shown in \cite{hausel&thaddeus, donagi&gaitsgory, donagi&pantev} that Hitchin systems for Langlands dual structure groups, $G$ and $G^L$, are dual, satisfying thereby the requirements of being \emph{Strominger--Yau--Zaslow (SYZ) mirror partners} \cite{SYZ}, which allows for the identification of T-duality with mirror symmetry between them. Since the group $G = \GL(n,\CC)$ is Langlands self-dual, we obtain a self-dual Hitchin system in this case, which is the one that we study in this paper.

The rich geometry of the moduli space of Higgs bundles $\M$ makes it an object of interest for theoretical physics. In \cite{vafa, strominger} it was shown that the dimensional reduction of an $N = 4$ Super Yang--Mills theory in $4$ dimensions gives a $2$ dimensional sigma model with hyperk\"ahler target $\M$, and, hence, S-duality in the former becomes T-duality (mirror symmetry) in the latter. This was the starting point for the ground-breaking article by Kapustin and Witten \cite{kapustin&witten} (see also \cite{witten}), where they relate the Geometric Langlands Conjecture and S-duality in the original $N = 4$ super Yang--Mills theory. Following Kapustin and Witten, a $\BBB$-brane is a pair consisting of a hyperk\"ahler submanifold and a hyperholomorphic vector bundle. Similarly, a $\BAA$-brane is given by a submanifold which is complex Lagrangian with respect to the first K\"ahler structure, and a flat vector bundle. In String Theory, branes are geometrical objects that encode the Dirichlet boundary conditions, and mirror symmetry \cite{kapustin&witten, witten} predicts a 1-1 correspondence between $\BBB$-branes on the moduli space of $G$-Higgs bundles and $\BAA$-branes on its $G^L$ counterpart.

Motivated by this context, many authors have studied branes in moduli spaces of Higgs bundles, see for instance \cite{hitchin_char, hitchin_spinors, gaiotto, BS1,BS2, BS3, BGP, heller&schaposnik, biswas&calvo&franco&garciaP, branco, Borel, FGOP, HMDP} and \cite{AFES} for a survey on this topic, and the geometry of these objects has been intensively described \cite{biswas&schaffhauser, biswas&garciaP&hurtubise1, biswas&garciaP&hurtubise2, garciaP&wilkins, baird1, baird2, schottky, schaffhauser}. More generally, due to their intrinsic geometric interest, one can also study hyperk\"ahler and complex Lagrangian submanifolds on other classes of hyperk\"ahler manifolds, like quiver varieties \cite{FJMa,HS}, and moduli spaces of stable sheaves on symplectic surfaces \cite{FJMe}.

\subsection{Our constructions}

In this paper we generalize the constructions of the Dirac--Higgs bundle and the Nahm transform of a stable Higgs bundle to the case of trivial degree and rank higher than $1$. This provides a class of (space filling) $\BBB$-branes that we transform under Fourier--Mukai, obtaining a partial description of the mirror dual $\BAA$-brane. 

In the case of coprime rank and degree, the universal Higgs bundle plays a central role in the construction of the Dirac--Higgs bundle. In our case, however, there is no universal Higgs bundle at hand, not even locally. To surpass this obstacle, we consider the gerbe of liftings of the projective universal bundle\footnote{We are grateful to the anonymous referee for suggesting this approach.}, and we introduce the notions of sheaves twisted and shifted by such gerbe. We can then construct the Dirac--Higgs bundle as a vector bundle twisted by our gerbe, showing that it is equipped with a hyperholomorphic connection. The techniques used in the construction of the Dirac--Higgs bundle can then be applied to define the Nahm transform of a stable Higgs bundle which is, again, a bundle twisted by our gerbe and equipped with a hyperholomorphic connection. This constitutes a family of $\BBB$-branes (one for each stable Higgs bundle) which we call \emph{Nahm branes}.

The second step is to study the behaviour of these Nahm branes under mirror symmetry. We work over the smooth locus of the Hitchin fibration, where mirror symmetry is expected to be realized via Fourier--Mukai transform. We check that the transformed sheaf is supported on a complex Lagrangian multisection of the Hitchin fibration. This is part of the data of a $\BAA$-brane, providing evidence for the existence of a correspondence between $\BBB$ and $\BAA$-branes conjectured by Kapustin and Witten.

\subsection{Organization of the paper}

In Sections \ref{sc Mm hyperkahler} and \ref{sc Hitchin fibration} we review the properties of the theory of Higgs bundles and the Hitchin system. In Section \ref{sc mirror} we survey, after restricting ourselves to the smooth locus of the Hitchin fibration, the Fourier--Mukai transform, which is expected to realize mirror symmetry in this context. In Section \ref{sc gerbes} we review gerbes and the notions of sheaves twisted by a gerbe, we also describe an example of gerbe over the moduli space of Higgs bundles of particular importance for us, the gerbe of liftings of the projective universal bundle. In Section \ref{sc Dirac-Higgs} we review the Dirac--Higgs bundle in general, and we provide an algebraic construction over the moduli space of Higgs bundles with $0$ degree as a bundle twisted by the gerbe of liftings. In Section \ref{sc tensorization} we study the behaviour of spectral data of Higgs bundles under tensorization, crucial for understanding our generalization of the Nahm transform of a stable Higgs bundle to rank higher than $1$, which we achieve in Section \ref{sc high rank Nahm}, thereby constructing Nahm $\BBB$-branes. In Section \ref{sc FM for twisted vector bundles} we adapt the Fourier--Mukai transform for sheaves twisted by a gerbe, in order to study in Sections \ref{sc FM Dirac-Higgs} and \ref{sc Fourier-Mukai on Nahm transform}, respectively, the transform of the Dirac--Higgs bundle and of the Nahm brane associated to any stable Higgs bundle.

%%%%%%%%%%%%%%%%%%%%%%%%%%%%%%%%%%%%%%%%%%%%%%%%%%%%%%%%%%%%%%%%%%%%%%%%%%%%%%%%%%%%%%%%%%%%%%%%%%%%%%%%%%%%%%%%%%%%%%%%%%%%%%%%%%%%%%%%%%%%%%%%%%%%%%%%%%%%%%%%%%%%%%%%%%%%%%%%%%%%%%%%%%%%%%%%%%%%%%%%%%%%%%

\section{Geometry of the Hitchin system}
\label{sc preliminaries}

%%%%%%%%%%%%%%%%%%%%%%%%%%%%%%%%%%%%%%%%%

\subsection{Non-abelian Hodge theory}\label{sc Mm hyperkahler}

In this section we introduce the moduli space of Higgs bundles, an object with an extremely rich geometry. In particular, it is equipped with a hyperk\"ahler structure.

Let us consider a smooth projective curve $X$ over $\CC$ of genus $g \geq 2$. Denote by $\EE$ the unique up to isomorphism $C^\infty$-bundle of rank $n$ over $X$ and consider a Hermitian metric $h$ on it. Denote by $\Gg$ the Gauge group of unitary automorphism of $\EE$ preserving the metric, and its complexification, $\Gg^\CC$, parametrizing all automorphisms of $\EE$. Recall that $\EE$ equipped with a Dolbeault operator $\ol{\partial}_E$ gives rise to a holomorphic vector bundle $E$. Out of $\ol{\partial}_E$ and the metric $h$, one can construct $\partial_E$ and $\partial_E + \ol{\partial}_E$ is a unitary connection on $E$.

A {\it Higgs pair} of rank $n$ on $X$ is pair $(\overline{\partial}_E, \varphi)$ where $\overline{\partial}_E$ is a Dolbeault operator on $\EE$ fixing an integrable complex structure on it, and $\varphi$ is an element of $\Omega_X^{1,0}(\End(\EE))$. Note that the space $\Aa$ of Higgs pairs is an affine space modeled in the infinite dimensional vector space $\Omega_X^{0,1}(\End(\EE)) \oplus \Omega_X^{1,0}(\End(\EE))$.

A {\it Higgs bundle} over $X$ is a Higgs pair $(\overline{\partial}_E, \varphi)$ satisfying $\overline{\partial}_E \, \varphi = 0$ (hence $\partial_E \, \varphi^* = 0$, where $\varphi^*$ is the adjoint of $\varphi$ with respect to the metric $h$). Equivalently, a Higgs bundle is a pair $\Ee = (E, \varphi)$, where $E$ is a holomorphic vector bundle on $X$, and $\varphi \in H^0(X, \End(E) \otimes K_X)$ is a holomorphic section of the endomorphisms bundle, twisted by the canonical bundle $K_X$. Let us write $\Bb \subset \Aa$ for the $\Gg^\CC$-invariant subset parametrizing Higgs bundles.

Recall from \cite{hitchin-self, simpson1, nitsure} that a Higgs bundle $\Ee$ is said to be \emph{(semi)stable} if every proper, non-trivial, $\varphi$-invariant sub-bundle $F\subset E$ satisfies 
$$ \frac{\deg F}{\rk F} < ~(\leq)~ \frac{\deg E}{\rk E} . $$
In addition, $\Ee$ is \emph{polystable} if it is a direct sum of semistable bundles $\Ee_i=(E_i, \varphi_i)$, all with the same slope $\deg E_i/\rk E_i$. It is possible to construct \cite{hitchin-self, simpson1, simpson2, nitsure} the moduli space,
\[
  \M = \Bb \big/\hspace{-.8ex}\big/ \Gg^\CC,
\]
of rank $n$ and degree $0$ semistable Higgs bundles on $X$. The locus of stable Higgs bundles $\M^\st \subset \M$ is a dense open subset identified with the quotient under $\Gg^\CC$ of $\Bb^{\, \st} \subset \Bb$, the set of stable Higgs bundles. When we need to specify the rank $n$ we shall write $\M_n$ and $\M_n^\st$.

Non-abelian Hodge theory establishes the existence of a homeomorphism \cite{hitchin-self, simpson1, simpson2, donaldson, corlette} between $\M$ and the moduli space of flat connections of rank $n$. This is a consequence of the construction of these moduli spaces as a hyperk\"ahler quotient of the space of Higgs pairs $\Aa$ by the gauge group $\Gg^\CC$ of complex automorphisms of $\EE$. Tangent to any Higgs pair $(\overline{\partial}_E, \varphi)$ we can consider its infinitesimal deformations $\dot{\alpha} \, d\ol{z} \in \Omega_X^{0,1}(\End(\EE))$ and $\dot{\varphi} \, dz \in \Omega_X^{1,0}(\End(\EE))$, with $z$ being a holomorphic coordinate of the base curve. Also, we consider $\dot{\alpha}^* \, dz \in \Omega_X^{1,0}(\End(\EE))$ and $\dot{\varphi}^* \, d\ol{z} \in \Omega_X^{0,1}(\End(\EE))$ to be infinitesimal deformations of $\partial_E$ and $\varphi^*$ respectively. The hyperk\"ahler structure on $\Aa$ is given by the following metric on this space,
\begin{equation}
  \label{eq HB metric}
\widetilde{g} \left ( (\dot{\alpha}_1, \dot{\varphi}_1), (\dot{\alpha}_2, \dot{\varphi}_2) \right ) = \frac{1}{4\pi} \int_X \tr \left ( \dot{\alpha}_1^* \dot{\alpha}_2 + \dot{\alpha}_2^* \dot{\alpha}_1 + \dot{\varphi}_1  \dot{\varphi}_2^* + \dot{\varphi}_2 \dot{\varphi}_1^* \right ) dz \wedge d\ol{z},
\end{equation}
and the complex structures $\widetilde{\Gamma}^1$,
$\widetilde{\Gamma}^2$, and $\widetilde{\Gamma}^3 =
\widetilde{\Gamma}^1 \widetilde{\Gamma}^2$. We denote by
$\widetilde{\omega}^j(\cdot, \cdot)
= \widetilde{g}(\cdot, \widetilde{\Gamma}^j(\cdot))$
the associated K\"ahler forms and consider the symplectic
forms $\widetilde{\Lambda}^j = \widetilde{\omega}^{j+1} + \ii
\widetilde{\omega}^{j-1}$ which are holomorphic for the corresponding
$\wt{\Gamma}^j$. 

From each of the K\"ahler forms $\widetilde{\omega}^j$ one can construct a moment map $\mu^j$. The space of Higgs bundles $\Bb$ can be identified with $(\mu^2)^{-1}(0) \cap (\mu^3)^{-1}(0)$ inside $\Aa$. Then, after an infinite-dimensional version of Kempf--Ness theorem, $\M$ can be identified with the hyperk\"ahler quotient 
\[
  \M \cong
  (\mu^1)^{-1}(0) \cap (\mu^2)^{-1}(0) \cap (\mu^3)^{-1}(0) \big/ \Gg.
\]
The $\Gg$-invariant complex structures $\widetilde{\Gamma}^j$ descend naturally to complex structures $\Gamma^j$ on $\M$. Also, the $2$-forms $\widetilde{\omega}^j$ and $\widetilde{\Lambda}^j$ are $\Gg$-invariant, so they provide naturally the K\"ahler forms $\omega^j$ on $\M$, and the holomorphic symplectic forms $\Lambda^j$.

Given a Higgs bundle $\Ee = (E,\varphi)$, consider the complex
\[
  C^\bullet_{\Ee} :
  \End (E)
  \stackrel{[ \cdot, \varphi]}{\longrightarrow}
  \End (E) \otimes K_X,
\]
which induces the following exact sequence
\begin{center}
\begin{tikzcd}
0\ar[r] & \HH^0(C^\bullet_{\Ee})\ar[r] & H^0(\End(E))\ar[d,phantom,""{coordinate, name=Y}] \ar[r," {[\cdot,\varphi]} "] & H^0(\End(E) \otimes K_X) \arrow[dll,
rounded corners,
to path={ -- ([xshift=2ex]\tikztostart.east)
|- (Y) [near start]\tikztonodes
-| ([xshift=-2ex]\tikztotarget.west)
-- (\tikztotarget)}] & ~ \\
~ & \HH^1(C^\bullet_{\Ee})\ar[r,"\eta"] & H^1(\End(E)) \ar[r,"{[\cdot,\varphi]}"] & H^1(\End(E) \otimes K_X) \ar[r] & \HH^2(C^\bullet_{\Ee}) \ar[r] & 0.
\end{tikzcd}
\end{center}
where $\HH^p(C^\bullet_{\Ee})$ are the hypercohomology groups for the complex $C^\bullet_{\Ee}$. If $\Ee$ is stable, $\HH^0(C^\bullet_{\Ee}) \cong \HH^2(C^\bullet_{\Ee}) \cong \CC$ so $\HH^1(C^\bullet_{\Ee})$ has fixed dimension $2n^2(g-1) + 2$ and, furthermore, it can be identified with the tangent space $T_{\Ee} \M^\st = \HH^1(C^\bullet_{\Ee})$. Then, $\M^\st$ is smooth, and since $\M^\st \subset \M$ is a dense open subset, one has that
\[
\dim \M = 2n^2(g-1) + 2.
\]

Thanks to Serre duality, $\varphi \in H^0(\End(E) \otimes K_X)$ can also be regarded as an element of the dual space $H^1(\End(E))^*$; one can define a 1-form $\theta$ on $\M^{\rm st}$ as the composition of the map $\eta:\HH^1(C^\bullet_\Ee) \to H^1(\End(E))$ and the pairing with $\varphi$, \emph{i.e.} $\theta(v) = \langle \varphi, \eta(v)\rangle$, for each $v \in \HH^1(C^\bullet_\Ee)$. Hence, $d\theta$ is proportional to the holomorphic symplectic form $\Lambda^1$, obtained from $\wt{\Lambda}^1$.

%--------------------------------------------------------------------------------------------------------------------------------------------------------------------------------------------------------------

\subsection{The Hitchin fibration}
\label{sc Hitchin fibration}

Besides its hyperk\"ahler nature, the moduli space of Higgs bundles is equipped with the structure of an algebraically completely integrable system by means of the spectral correspondence that we revise \cite{hitchin_duke, BNR, simpson2} in this section. 

Let $(q_1, \dots, q_n)$ be a basis of $\GL(n,\CC)$-invariant polynomials  with $\deg(q_i) = i$. The {\it Hitchin fibration} is the dominant morphism
$$
\morph{\M}{B := \bigoplus_{i = 1}^n H^0(X,K_X^{\otimes i})}{(E,\varphi)}{\left ( q_1(\varphi), \dots, q_n(\varphi) \right ),}{}{h}
$$
and we refer to $B$ as the {\it Hitchin base}.
Consider the total space $\Tot(K_X)$ of the canonical bundle, and the obvious algebraic surjection $p: \Tot(K_X) \to X$; let $\lambda$ be the tautological section of the pullback bundle $p^*K_X \to \Tot(K_X)$. Given an element $b = (b_1, \dots, b_n) \in B$ we construct the associated {\it spectral curve} $S_b \subset \Tot(K_X)$ by considering the vanishing locus of the section
\[
\lambda^n + p^*b_1 \lambda^{n-1} + \dots + p^*b_{n-1} \lambda + p^*b_n \in H^0(X,p^*K_X^{\otimes n}).
\]
Restricting $p$ to $S_b$ yields a finite morphism $p_b : S_b \to X$ of degree $n$.

One can further consider the pull-back of $p^*K_X^{\otimes n}$ to the product $\Tot(K_X) \times B$, which is naturally equipped with a section obtained by pull-back of $\lambda$. Seeing the $b_i$ as coordinates in $B$, one obtains a second section of our bundle, whose vanishing locus provides naturally a family of spectral curves $\Ss\subset \Tot(K_X) \times B$ for which we naturally have that $\Ss\cap(\Tot(K_X) \times \{b\}) = S_b$. Restricting the projection $p\times\mathbf{1}_B:\Tot(K_X) \times B \to X \times B$, we obtain a finite morphism of degree $n$:
\begin{equation} \label{eq def p}
p: \Ss \to X \times B. 
\end{equation}
For every $b \in B$, the corresponding spectral curve $S_b$ belongs to the linear system $|nX|$, and, by Bertini's theorem, it is generically smooth and irreducible. Furthermore, since the canonical divisor of the symplectic surface $\Tot(K_X)$ is zero, the genus of $S_b$ is given by
\begin{equation} \label{eq genus of the spectral curve}
d := g \left (S_b \right) = 1 + n^2(g-1).
\end{equation}
Thanks to Riemann--Roch theorem, $p_{*}\Oo_{S_b}$ is a degree
$-(n^2 - n)(g-1)$ vector bundle of rank $n$. This motivates the notation
\[
\delta := (n^2 - n)(g-1).
\]
Following \cite{BNR}, we consider the push-forward 
\begin{equation}\label{eq vector bundle}
E_L := p_{*}L 
\end{equation}
of a torsion free sheaf $L$ on $S_b$ of rank $1$ and degree $\delta$, which is a vector bundle on $X$ of rank $n$ and degree $\delta + \deg(\pi_*\Oo_{S_b}) = 0$. We consider as well the multiplication by the restriction to $S_b$ of tautological section, $\lambda_b : \Oo_{S_b} \to \Oo_{S_b} \otimes p^*K_X$. Note that this induces the following twisted endomorphism of $L$
\[%begin{equation} \label{eq tensoring by tautological section}
\morph{L}{L \otimes p^*K_X}{s}{s \otimes \lambda_b.}{}{\psi_b}
\]%end{equation}
whose push-forward returns the Higgs field 
\begin{equation} \label{eq Higgs field} 
\varphi = p_{*}\psi_b : E_L \to E_L \otimes K_X, 
\end{equation}
so that $b = h(\varphi)$. Thanks to the spectral correspondence \cite{hitchin_duke, BNR, simpson2, schaub, deCataldo}, each Hitchin fibre is identified with the compactified Jacobian of the corresponding spectral curve
\[
h^{-1} \left ( b \right ) \cong \overline{\Jac}^{\, \delta}(S_b).
\]

Fixing a point $x_0 \in X$ in our curve, we construct a smooth section $\hat{\sigma} : B \to \overline{\Jac}^{\, \delta}_{B}(S)$ by considering for every $b$ the line bundle $p_b^*\Oo_X(x_0)^{\otimes(n-1)(g-1)}$ on $S_b$. Such choice induces the following identification 
\begin{equation} \label{eq compactified Jacobian is trivial}
\overline{\Jac}^{\, \delta}_{B}(\Ss) \cong \overline{\Jac}^{\, 0}_{B}(\Ss).
\end{equation}

%--------------------------------------------------------------------------------------------------------------------------------------------------------------------------------------------------------------

\subsection{Fourier--Mukai transform and mirror symmetry for the Hitchin system}
\label{sc mirror}

A very active field of research is the occurrence of mirror symmetry phenomena between Higgs moduli spaces of pairs of Langlands dual groups. In the so-called semi-classical limit, mirror symmetry is expected to be realised via a Fourier--Mukai transform relative to the Hitchin fibration. In this section we review this correspondence within the framework of the moduli space of Higgs bundles for $\GL(n,\CC)$, which is Langlands self-dual. Hence, the mirror of $\M$ is (conjecturally) itself. Even if the Fourier--Mukai transform extends out of the locus of smooth Hitchin fibres \cite{arinkin, melo1, melo2}, we restrict here to the original construction of Mukai over (families of) abelian varieties. We do so because the locus of smooth Hitchin fibres is dense in the Higgs moduli space, hence the study of the duality there is enough for our purposes.

Let us denote by $B' \subset B$ the Zariski open subset given by those points $b \in B$ such that $S_b$ is smooth. We denote the restriction of $\Ss$ and $\M$ to $B'$ by 
\[
\Ss' := \Ss|_{B'}
\]
and 
\begin{equation} \label{eq M = Jac_B of S}
\M' = \M|_{B'} \cong \Jac^{\delta}_{B'}(\Ss').
\end{equation}
Note that all the points of $\M'$ are associated to line bundles over smooth spectral curves, which are automatically stable. Therefore, $\M'$ is contained in the stable locus,
\[
\M' \subset \M^\st.
\]
By the autoduality of smooth relative jacobians, we know that $\Jac_{B'}^0(\Ss') \cong \Jac_{B'}^{\delta}(\Ss')^\vee$; it thus follows from \eqref{eq M = Jac_B of S} that this is further isomorphic to $\Jac_{B'}^{\delta}(\Ss')^\vee \cong \M'$. Then, one can consider the commuting diagram 
\begin{equation}
  \label{sc duality of the Hitchin system on M'}
  \begin{gathered}
  \xymatrix{
    & \Jac^{\, \delta}_{B'}(\Ss') \times_{B'} \Jac^{\,\delta}_{B'}(\Ss')^\vee
    % \ar[d]^{\cong} & \\
    % & \Jac^{\, \delta}_{B'}(\Ss') \times_{B'} \Jac^{\, \delta}_{B'}(\Ss')^\vee 
    \ar[dd] \ar[rd]^{\check{\pi}} \ar[ld]_{\hat{\pi}} \\
    \M' \cong \Jac^{\, \delta}_{B'}(\Ss') \ar[rd]^{\hat{h}}
    & & \M' \cong \Jac^{\, \delta}_{B'}(\Ss')^\vee \ar[ld]_{\check{h}}\\
    & B', \ar@/^1pc/[ul]^{\hat{\sigma}} \ar@/_1pc/[ur]_{\check{\sigma}}
  }
  \end{gathered}
\end{equation}
where $\hat{\sigma}$ is the constant section considered in Section \ref{sc Hitchin fibration}, and $\check{\sigma}$ is the section given by considering the structural sheaf on each $\Jac^{\delta}(S_b)$.

We will study mirror symmetry in the sense of Strominger--Yau--Zaslow \cite{SYZ} in this context.

Since the relative scheme $\M' = \Jac^{\delta}_{B'}(\Ss')$ has a section $\hat{\sigma}$, it is well known (see \cite[8.2, Proposition 4]{bosh} for instance) that its relative Jacobian carries a Poincar\'e bundle 
\[
\Pp \to  \Jac_{B'}^{\delta}(\Ss') \times_{B'} \Jac_{B'}^{\delta}(\Ss')^\vee. 
\]
With it, we can consider the relative Fourier--Mukai transforms
\begin{equation} \label{eq def FM}
\morph{D^b_{B'}(\Jac^{\delta}_{B'}(\Ss')) \cong D^b_{B'}(\M')}{D^b_{B'}(\Jac^{\delta}_V(\Ss')^\vee) \cong D^b_{B'}(\M')}{F^\bullet}{\check{F}^\bullet := R\check{\pi}_* \left( \Pp \otimes \hat{\pi}^* F^\bullet \right )}{}{R\check{\FFf}}
\end{equation}
and
\begin{equation} \label{eq def FM*}
\morph{D^b_{B'}(\Jac^{\delta}_{B'}(\Ss')^\vee) \cong D^b_{B'}(\M')}{D^b_{B'}(\Jac^{\delta}_{B'}(\Ss')) \cong D^b_{B'}(\M')}{G^\bullet}{\hat{G}^\bullet := R\hat{\pi}_* \left( \Pp^* \otimes \check{\pi}^* G \right ).}{}{R\hat{\FFf}}
\end{equation}
After \cite{mukai}, this is an equivalence of categories since
\begin{equation} \label{eq inverse of FM}
R\hat{\FFf} \circ R\check{\FFf} = [d] \circ \left ( \Id_{\Jac}^{-1} \right )^*,
\end{equation}
where $\Id_{\Jac}^{-1}$ denotes the involution given by inverting elements on each $\Jac^{\delta}(S_b)$ under the group structure (recall \eqref{eq compactified Jacobian is trivial}), and $d$ is defined in \eqref{eq genus of the spectral curve}.

We say that a sheaf $F$ on $\Jac_{B'}^\delta(\Ss')$ is {\it $\ell$-WIT} (after {\it Weak Index Theorem}) if its image under $R\hat{\FFf}$ is a complex supported in degree $\ell$. Let us denote by $\WIT_\ell \left (\Jac_{B'}^\delta(\Ss') \right)$ the category of $\ell$-WIT sheaves. It follows from \eqref{eq inverse of FM} that the Fourier--Mukai transform $R\hat{\FFf}$ induces an equivalence of categories
\[
R\hat{\FFf} : \WIT_\ell\left (\Jac_{B'}^\delta(\Ss') \right) \stackrel{\cong}{\longrightarrow} \WIT_{d-\ell}\left (\Jac_{B'}^\delta(\Ss') \right)
\]
with inverse $R\check{\FFf}$.

\subsection{Flat unitary gerbes and the universal bundle}
\label{sc gerbes}

When the rank and the degree are coprime, the moduli space of stable Higgs bundles is fine and the universal bundle plays an important role in the construction of the Dirac--Higgs bundle (see Section \ref{sc Dirac-Higgs} below). In our case, with the degree being trivial, there is no universal bundle at hand, not even over the stable locus or any other Zariski open subset of the moduli space (this was proven for vector bundles by Ramanan \cite{ramanan} and reproved by Drezet and Narasimhan \cite{drezet&narasimhan} but a similar discussion holds for Higgs bundles). The best we have at hand is a local universal bundle in the \'etale topology constructed by Simpson \cite{simpson2}. 

Since our universal bundle only exists locally, we need to introduce gerbes. We refer to \cite{hitchin_lagrangian} for a nice introduction to gerbes. Given an algebraic variety $Y$, denote by $\Tors(\U(1), Y)$ the group of $\U(1)$-torsors over $Y$, {\it i.e.} the group of flat unitary line bundles on $Y$. A {\it flat unitary gerbe in the \'etale topology} on $Y$ is a sheaf of categories in the \'etale topology over $Y$ such that its restriction to each \'etale open subset $Y' \to Y$ is a torsor for the group $\Tors(\Un(1), Y')$. Given an \'etale covering $\{ Y_i \to Y \}_{i \in I}$, a gerbe provides a category (a groupoid indeed) for every $Y_i$, the natural transformations of these categories in the intersections $Y_{ij} := Y_i \times_Y Y_j$ are realized via tensoring by flat unitary line bundles $L_{ij} \to Y_{ij}$. Therefore, a gerbe defines a set of flat unitary line bundles over the intersections $\{ L_{ij} \to Y_{ij} \}_{i,j \in I}$ such that $L_{ij} \cong L_{ji}^{-1}$ and over the triple intersections $Y_{ijk} := Y_i \times_Y Y_j \times_Y Y_k$, one has that $\zeta_{ij}^* L_{ij} \otimes \zeta_{jk}^*L_{jk} \otimes \zeta_{ki}^*L_{ki}$ is isomorphic to the trivial bundle on $Y_{ijk}$, being $\zeta_{ij}$ the obvious projection $Y_{ijk} \to Y_{ij}$.

If $\beta$ is a flat unitary gerbe in the \'etale topology and $\{ Y_i \to Y \}_{i \in I}$ is an \'etale cover of $Y$, a {\it $\beta$-twisted sheaf} $\F$ is a set of sheaves $\{ F_i \to Y_i \}_{i \in I}$ such that on each intersection $\zeta_i^*F_i \cong L_{ij} \otimes \zeta_j^*F_j$, where $\zeta_i$ denotes the projection $Y_i \times_Y Y_j \to Y_i$. In this case, we also say that the cover $\{ Y_i \}_{i \in I}$ is {\it fine enough} for $\F$. A $\beta$-twisted sheaf $\F$ is a {\it $\beta$-twisted vector bundle} of rank $n$ if all the $F_i$ are locally free sheaves of rank $n$. We say that $\boldsymbol{\nabla} = \{ \nabla_i \}_{i \in I}$ is a connection on the $\beta$-twisted vector bundle $\F$ if $\nabla_i$ is a connection on each of the $F_i$ satisfying the compatibility relations $\zeta_i^* \nabla_i \cong \Id_{L_{ij}} \otimes \zeta_j^*\nabla_j + \nabla_{ij} \otimes \Id_{\zeta_j^*F_j}$, where $\nabla_{ij}$ is the flat unitary connection naturally defined on $L_{ij}$ via the flat unitary gerbe $\beta$. Given two $\beta$-twisted sheaves $\F_1 = \{ F_{1,i} \to Y_i \}_{i \in I}$ and $\F_2 = \{ F_{2,i} \to Y_i \}_{i \in I}$ that are fine enough for the same \'etale covering, a morphism $\boldsymbol{\psi} : \F_1 \to \F_2$ of $\beta$-twisted sheaves is a collection of morphisms of sheaves $\{ \psi_i : F_{1,i} \to F_{2,i} \}_{i \in I}$ satisfying $\zeta_i^*\psi_i \cong \Id_{L_{ij}} \otimes \zeta_j^*\psi_j$ for any $i,j \in I$. In a similar way, the notions of quotient of sheaves, complex of sheaves and cohomology generalize naturally to the context of $\beta$-twisted sheaves. If $f : Z \to Y$ is a morphism of algebraic varieties, $\beta$ a flat unitary gerbe in the \'etale topology over $Y$ and $\F$ a $\beta$-twisted sheaf on $Y$, we define $f^*\F$ to be $\{ f_i^* F_i \to Y_i \times_{Y} Z \}$, where $f_i : Y_i \times_{Y} Z \to Y_i$ is the projection to the first factor and we note that $\{ Y_i \times_{Y} Z \to Z \}_{i \in I}$ is an \'etale covering. Similarly, for a $f^*\beta$-twisted sheaf $\F$ on $Z$ with a fine enough \'etale covering of the form $\{ Y_i \times_{Y} Z \to Z \}_{i \in I}$, we set $f_* \F := \{ f_{i,*} F_i \to Y_i \}_{i \in I}$. Thanks to the projection formula and base-change theorems, $f_* \F$ is a $\beta$-twisted sheaf as
\begin{align*}
(\zeta_i \times_Y \Id_{Z})^* \left ( f_{i,*}F_i \right ) & \cong f_{ij,*} \left ( \zeta_i^* F_i \right )
\\
& \cong f_{ij,*} \left ( L_{ij} \otimes \zeta_j^*F_j \right )
\\
& \cong f_{ij,*} \left ( f_{ij}^*L_{ij} \otimes \zeta_j^*F_j \right ) 
\\
& \cong L_{ij} \otimes f_{ij,*}\zeta_j^*F_j
\\
& \cong L_{ij} \otimes (\zeta_j \times_Y \Id_{Z})^*f_{i,*}F_j,
\end{align*}
where $f_{ij}$ denotes the projection $Y_{ij} \times_{Y} Z \to Y_{ij}$. 

Given a $\beta$-twisted sheaf $\F_1$ and vector bundle $F_2$ over $Y$, we define $\F_1 \otimes F_2$ to be the $\beta$-twisted sheaf $\{ F_{1,i} \otimes y_i^*F_2  \to Y_i \}_{i \in I}$ where we denote by $y_i$ the \'etale maps $Y_i \to Y$.

As we said at the beginning of this section, Simpson \cite[Theorem 4.7 (4)]{simpson2} ensures the existence of local universal Higgs bundles $\left ( U_i , \Phi_i \right ) \to X \times Z_i$ for a certain \'etale covering $\{ Z_i \to \M^\st \}_{i \in I}$ of the stable locus. By universality, there exists a line bundle over each intersection $L_{ij} \to Z_i \times_Y Z_j$ satisfying $U_i \cong \pi_{ij}^* L_{ij} \otimes U_{j}$, where $\pi_{ij}$ is the obvious projection $X \times Z_i \times_Y Z_j \to Z_i \times_Y Z_j$. This defines a flat unitary gerbe in the \'etale topology over $\M^\st$ that we denote by $\beta$ for the rest of the paper. After having set our gerbe $\beta$, observe that $\U = \{ U_i \to X \times Z_i \}_{i \in I}$ is a $\pi_\M^*\beta$-twisted vector bundle over $\M^\st$, where $\pi_\M : X \times \M^\st \to \M^\st$ is the obvious projection. In view of this, we refer to
\[
\left ( \U, \boldsymbol{\Phi} \right ) = \left \{ \left ( U_i , \Phi_i \right ) \to X \times Z_i \right \}_{i \in I},
\]
as the {\it universal $\pi_\M^*\beta$-twisted Higgs bundle}. This object will be crucial in our description of the Dirac Higgs bundle which we shall address in the next subsection.

We finish this section adapting Kapustin--Witten's definition of a $\BBB$-brane \cite{kapustin&witten} to this context. Suppose that $Y$ is equipped with a hyperk\"ahler structure, we say that $(\F, \boldsymbol{\nabla})$ is a {\it space filling $\BBB$-brane} if $\F$ is a $\beta$-twisted vector bundle and $\boldsymbol{\nabla}$ is a connection which is hyperholomorphic, {\it i.e.} is $(1,1)$ with respect to the three complex structures of $Y$.

%%%%%%%%%%%%%%%%%%%%%%%%%%%%%%%%%%%%%%%%%%%%%%%%%%%%%%%%%%%%%%%%%%%%%%%%%%%%%%%%%%%%%%%%%%%%%%%%%%%%%%%%%%%%%%%%%%%%%%%%%%%%%%%%%%%%%%%%%%%%%%%%%%%%%%%%%%%%%%%%%%%%%%%%%%%%%%%%%%%%%%%%%%%%%%%%%%%%%%%%%%%%%%%

\subsection{The Dirac--Higgs bundle}

\label{sc Dirac-Higgs}

Hitchin (see \cite{hitchin_dirac} for instance) constructed the Dirac--Higgs bundle over the moduli space of Higgs bundle with coprime rank and degree, showing that it can be equipped with a hyperholomorphic structure. The Dirac--Higgs bundle was used by Hausel \cite{hausel} to study the cohomology of this moduli space. Blaavand \cite{blaavand} studied this object over the moduli space $\M^\st$, showing that it exists only as a differential object. In a local sense, Hitchin's proof of the existence of a  hyperholomorphic structure on the Dirac--Higgs bundle extends to this case, allowing us to produce $\BBB$-branes out of it.

Our ultimate goal is to study the mirror dual of $\BBB$-branes constructed out of the Dirac--Higgs bundle. For this task, we shall Fourier--Mukai transform the sheaves underlying these $\BBB$-branes. As the Fourier--Mukai transform is an algebraic device, we need an algebraic construction of the Dirac--Higgs bundle, which we address in this section. To surpass the non-existence of a universal bundle, we make use of the flat unitary gerbe $\beta$ defined in the \'etale topology of $\M^\st$.

Let us fix a Hermitian metric on the rank $n$ topologically trivial $C^\infty$-bundle $\EE$ over $X$. Associated to it, consider the vector space (of infinite dimension)
\[
\underline{\Omega} := \Omega_X^{1,0}(\EE)\oplus\Omega_X^{0,1}(\EE),
\]
which comes equipped with a natural metric. Given a Higgs bundle $\Ee = (E, \varphi)$ supported on $\EE$, we write $\overline{\partial}_E$ for the associated Dolbeault operator and $\partial_E$ for the $(1,0)$-part of the Chern connection constructed with the metric and $\overline{\partial}_E$. Hitchin introduced in \cite{hitchin_dirac} the following \emph{Dirac--Higgs operators}
\[
\DDd_{\Ee} = \begin{pmatrix} \partial_E & -\varphi \\  \varphi^* & -\overline{\partial}_E \end{pmatrix} \, :  \begin{pmatrix} \Omega_X^{0}(\EE) \\ \Omega_X^{0}(\EE) \end{pmatrix} \longrightarrow \underline{\Omega} = \begin{pmatrix} \Omega_X^{1,0}(\EE) \\ \Omega_X^{0,1}(\EE) \end{pmatrix}
\]
and
\[
\DDd_{\Ee}^* = \begin{pmatrix} \overline{\partial}_E &  \varphi \\ \varphi^* & \partial_E \end{pmatrix} :  \, \underline{\Omega}= \begin{pmatrix} \Omega_X^{1,0}(\EE) \\ \Omega_X^{0,1}(\EE) \end{pmatrix} \longrightarrow \begin{pmatrix} \Omega_X^{1,1}(\EE) \\ \Omega_X^{1,1}(\EE) \end{pmatrix}.
\]
Let $\HH^p(\Ee)$ denote the hypercohomology groups of the complex of sheaves $E\stackrel{\varphi}{\rightarrow}E\otimes K_X$. The key fact about such operators is that 
$$ 
\ker \DDd_{\Ee} \simeq \HH^0(\Ee)\oplus \HH^2(\Ee)$$
and
\begin{equation}\label{iso-h-coho}
\ker \DDd_{\Ee}^* \simeq \HH^1(\Ee),
\end{equation}
see \cite[Section 7]{hitchin_dirac} or \cite[Lemma 2.4.1 and Remark 2.4.2]{blaavand}. Hausel proved in \cite[Corollary 5.1.4]{hausel} that if $\Ee$ is a nontrivial stable Higgs bundle of degree 0, \emph{i.e.} $\Ee\ne(\mathcal{O}_X,0)$, then $\HH^0(\Ee)=\HH^2(\Ee)=0$, so that $\ker \DDd_{\Ee}=0$.
Since the index of $\DDd_{\Ee}$ is $-2 n (g - 1)$, see \cite[Lemma 2.1.8]{blaavand} or \cite[page 1226]{frejlich&jardim}, we conclude that
\begin{equation} \label{eq ker D* fixed}
\dim \ker \DDd_{\Ee}^* = 2 n (g - 1)
\end{equation}
whenever $\Ee$ is a semistable Higgs bundle of degree 0 of rank $n$ without trivial factors. Furthermore, the vector space $\ker \DDd_{\Ee}^*$ does not depend upon the choice of a representative within the S-equivalence class $[\Ee]$ of $\Ee$. 

Applying the Dirac operators pointwise in $\Bb^\st$ one obtains the corresponding morphism of trivial bundles
\[
\DDd^* : \ul{\Omega} \times \Bb^\st \longrightarrow \left ( \Omega_X^{1,1}(\EE) \oplus \Omega_X^{1,1}(\EE) \right ) \times \Bb^\st.
\]
After \eqref{eq ker D* fixed}, the kernel $\ker \DDd^*$ defines a $C^\infty$ vector bundle of rank $2n(g-1)$ over $\Bb^{\, \st}$. If ever $\ker \DDd^*$ descends from $\Bb^{\, \st}$ to give a vector bundle on $\M^{\, \st}$ we call the resulting object the {\it Dirac--Higgs bundle}. When our moduli space is equipped with a universal family, the Dirac--Higgs bundle is defined as the pull-back of $\Dd$ under the section $\M^{\, \st} \to \Bb^{\, \st}$ obtained from the universal family. The rank one case, where $\M_1^\st = \M_1$, is one of the few cases were the construction of the Dirac--Higgs bundle is possible, and it was achieved by the second named author in \cite{frejlich&jardim}.

For $n>1$ (and trivial degree) we have already seen that no universal bundle only exists, not even Zariski locally. As we have seen in Section \ref{sc gerbes}, for general rank, the best we can obtain is the $\pi_M^*\beta$-twisted universal bundle $(\U, \boldsymbol{\Phi}) = \{ (U_i, \Phi_i) \to X \times Z_i) \}$ over the moduli space of stable Higgs bundles $\M^\st$. We can now define the family of Dirac-type operators
\begin{equation} \label{eq pointwise definition of Dd*_U}
\DDd_{(U_i, \Phi_i)}^* : \underline{\Omega} \times Z_i \longrightarrow \left ( \Omega_X^{1,1}(\EE) \oplus \Omega_X^{1,1}(\EE) \right ) \times Z_i,
\end{equation}
given by $\left . \DDd_{(U_i, \Phi_i)}^* \right |_z = \DDd_{(U_i, \Phi_i)|_z}^*$. Recalling \eqref{eq ker D* fixed}, let us denote, for every $i \in I$, the rank $2 n (g - 1)$ algebraic sub-bundle of $\underline{\Omega} \times Z_i$
\[
D_i := \ker \DDd_{(U_i, \Phi_i)}^*, 
\]
and consider 
\[
\D := \left \{ D_i \to Z_i \right \}_{i \in I},
\]
which we shall call the \emph{$\beta$-twisted Dirac--Higgs bundle}.

Adapting the work of Hausel \cite{hausel}, one can describe the
$\beta$-twisted Dirac--Higgs bundle in terms of the universal bundle
by means of \eqref{iso-h-coho}. This will show that the
$\beta$-twisted Dirac--Higgs bundle is, indeed, a $\beta$-twisted
bundle over $\M^\st$, what justifies its name. 

\begin{proposition} \label{pr Hausel's description of Dirac-Higgs}
Consider the obvious projections $\pi_{\M} : X \times \M^\st \to
\M^\st$ and $\pi_{X} : X \times \M^\st \to X$. The $\beta$-twisted
Dirac--Higgs bundle is a $\beta$-twisted bundle over $\M^\st$
isomorphic to 
\begin{equation} \label{eq DD in terms of UU}
  \D \cong
  \mathbb{R}^1 \pi_{\M, *} \left(
    \U \stackrel{\boldsymbol{\Phi}}{\longrightarrow}
    \U \otimes \pi_X^* K_X
  \right). 
\end{equation}
\end{proposition}

\begin{proof}
Assuming the isomorphism \eqref{eq DD in terms of UU}, it follows from
the projection formula that $\D$ is a $\beta$-twisted bundle over
$\M^\st$. We then focus on the proof of this isomorphism. Considering
each local universal bundle $(U_i, \Phi_i)$ over the \'etale open open
subset $Z_i \to \M^\st$ and recall from Section \ref{sc gerbes} that
we denoted $\pi_{\M,i} : X \times Z_i \to Z_i$. Note that 
\[
  \RR^0 \pi_{\M, i, *} \left(
    U_i \stackrel{\Phi_i}{\longrightarrow} U_i \otimes \pi_X^* K_X
  \right)
  = \RR^2 \pi_{\M, i, *} \left(
    U_i \stackrel{\Phi_i}{\longrightarrow} U_i \otimes \pi_X^* K_X
  \right) = 0
\]
since, as observed above, $\HH^0(\Ee)=\HH^2(\Ee)=0$ for each $\Ee\in \M^\st$. It follows from \cite[Corollary 5.1.4]{hausel} that  
\[
D_i \cong \RR^1 \pi_{\M, i, *} \left (U_i \stackrel{\Phi_i}{\longrightarrow} U_i \otimes \pi_X^* K_X \right ).
\]
Globally, one gets \eqref{eq DD in terms of UU}, and the proof is completed.
\end{proof}

One can also define a connection on the $\beta$-twisted Dirac--Higgs
bundle $\D = \left \{ D_i \to Z_i \right \}_{i \in I}$. Consider the
trivial connection $d_i : \underline{\Omega} \times Z_i \to
\underline{\Omega}\otimes K_{Z_i}$, where $K_{Z_i}$ is the sheaf of
first order differentials in $Z_i$. Denote by $\Id_{K}$ the identity
in $K_{Z_i}$. Consider also the embedding $j_i : D_i \hookrightarrow
\underline{\Omega} \times Z_i$ and the projection $\pr_i :
\underline{\Omega} \times Z_i \to D_i$ defined by the natural metric
on $\underline{\Omega} \times Z_i$. Let us consider the connection
given by the composition 
\[
\nabla_i = (\pr_i \otimes \Id_K) \circ d_i \circ j_i
\]
and note that this defines a connection on the $\beta$-twisted
Dirac--Higgs bundle $\D$, 
\[
\boldsymbol{\nabla} = \{ \nabla_i : D_i \to D_i \otimes K_{Z_i}  \}_{i \in I}
\]
that we call the {\it Dirac--Higgs} connection. The importance of this
connection comes from fact that the Dirac--Higgs connection is of type
$(1,1)$ with respect to all complex structures, see in \cite[Theorem 2.6.3]{blaavand}.
%and in \cite[Proposition 11]{frejlich&jardim} for rank $1$ Higgs bundles.

One can check that
the $\nabla_i$ satisfy the compatibility conditions stated in Section
\ref{sc gerbes} and $\boldsymbol{\nabla}$ is a connection on the
$\beta$-twisted Dirac--Higgs bundle $\D$. Hence, it equips the
Dirac--Higgs bundle with a hyperholomorphic structure and $(\D,
\boldsymbol{\nabla})$ is a space-filling $\BBB$-brane on $\M^\st$.

%------------------------------------------------------------------
%------------------------------------------------------------------

\section{Tensorization and spectral data}
\label{sc tensorization}

The morphism between Higgs moduli spaces that one obtains by
considering the tensor product with a particular Higgs bundle will be
crucial in our description of the Nahm transform of high rank that we
provide in Section \ref{sc high rank Nahm} below. In this section we
explore the behaviour of the spectral data under tensorization,
generalizing partial results established in \cite{bradlow&schaposnik}
for Higgs bundles of rank $2$ and $4$. This is required in Section
\ref{sc Fourier-Mukai on Nahm transform}, during our study of the
mirror branes dual to the ones we obtain after Nahm transform of high
rank. 

As it is useful for the purpose of the remaining sections, we add to the notation related to the moduli space of Higgs bundles a sub-index indicating the rank.

Let us introduce in this section the tensorization of two Higgs bundles $\Ee = (E, \varphi)$ and $\Ff = (F, \phi)$, 
\[
\Ee \otimes \Ff := \left ( E \otimes F, \varphi \otimes \Id_F + \Id_E \otimes \phi \right ).
\]
It is well known that, if $\Ee$ and $\Ff$ are semistable, then $\Ee \otimes \Ff$ is semistable too. Then, fixing some $\Ee \in \M_n$, one can define a map
\begin{equation} \label{eq definition of tau}
\morph{\M_m}{\M_{nm}}{[\Ff]}{[\Ee \otimes \Ff].}{}{\tau^\Ee_m}
\end{equation}

\begin{remark} \label{rm tau hyperholomorphic}
Note that $\tau^\Ee_m$ is hyperholomorphic, meaning that it is a holomorphic morphism between $(\M_m, \Gamma_m^i)$ and $(\M_{nm}, \Gamma_{nm}^i)$ for each of the $i = 1$, $2$ or $3$. As it is defined, $\tau^\Ee_m$ is clearly holomorphic for $i = 1$. To see that it is also holomorphic for $i = 2$, consider the vector bundle with flat connection $(E', \nabla_{E'})$ corresponding to $\Ee$ under the Hitchin--Kobayashi correspondence, and observe that $\tau^\Ee_m$, in the complex structure $\Gamma_2$, sends the vector bundle with flat connection $(F', \nabla_{F'})$ corresponding to $\Ff$, to $(E' \otimes F', \nabla_{E'} \otimes \Id_{F'} + \Id_{E'} \otimes \nabla_{F'} )$. If $\tau^\Ee_m$ is holomorphic for $i = 1$ and $i = 2$, it is also holomorphic for $i = 3$ since this complex structure is given by the composition of the previous two.
\end{remark}

The map given by the sum along the fibres of the canonical line bundle,
\[
\sigma : K_X \times_X K_X \longrightarrow K_X,
\]
will be necessary for the description of $\tau^\Ee_m$ under the spectral correspondence, which we address next.

\begin{proposition} \label{pr description of Sigma_Phi}
Let $\Ee$ be a semistable Higgs bundle with spectral data $(S_\Ee,
L_\Ee)$ and $\Ff$ a (stable) Higgs bundle, associated to $(S_\Ff,
L_\Ff)$ with $S_\Ff$ smooth.
The tensor product
$\Ee \otimes \Ff
= (E \otimes F, \varphi \otimes \Id_F + \Id_E \otimes \phi)$
is a semistable Higgs bundle with spectral data
$(L_{\Ee \otimes \Ff},S_{\Ee \otimes \Ff})$ satisfying
\begin{equation} \label{eq description of Sigma_Phi}
S_{\Ee \otimes \Ff} = \sigma(S_{\Ee} \times_X S_{\Ff}),
\end{equation}
and
\[
L_{\Ee \otimes \Ff} \cong  \sigma_* \left ( q_\Ee^*L_\Ee \otimes q_\Ff^*L_\Ff \right ),
\]
where $q_\Ee$ and $q_\Ff$ denote the projections from $S_\Ee \times_X S_\Ff$ to $S_\Ee$ and $S_\Ff$.
\end{proposition}

\begin{proof}
Since $S_\Ff$ is smooth, $L_\Ff$ is a line bundle over $S_\Ff$ and $q_\Ff$ is a smooth morphism. This implies that $\Ff$ is stable, so the tensor product $\Ee \otimes \Ff$ is semistable as $\Ee$ is so. 

By construction, $S_{\Ee \otimes \Ff}$ is a projective curve contained in $\Tot(K_X)$, and the restriction of the projection morphism $p_{\Ee \otimes \Ff} : S_{\Ee \otimes \Ff} \to X$ is an $nm$-cover. Being defined as the push-forward under $\sigma$, if $L_{\Ee \otimes \Ff}$ has a sub-sheaf of dimension $0$, so does $q_\Ee^*L_\Ee \otimes q_\Ff^*L_\Ff$. We have that $q_\Ee^*L_\Ee$ is torsion free, as $q_\Ee$ is a smooth morphism and $L_\Ee$ is torsion free. Since $q_\Ff^*L_\Ff$ is a line bundle, $q_\Ee^*L_\Ee \otimes q_\Ff^*L_\Ff$ is also torsion-free, and this implies that $L_{\Ee \otimes \Ff}$ is torsion free as well, so the pair $(S_{\Ee \otimes \Ff}, L_{\Ee \otimes \Ff})$ is the spectral data of some Higgs bundle. The proof would be completed if we show that this Higgs bundle is indeed $\Ee \otimes \Ff$. In view of \eqref{eq vector bundle} and \eqref{eq Higgs field}, we need to show that $E \otimes F$ and $\varphi \otimes \Id_F + \Id_E \otimes \phi$ arise as the push-forward under $p_{\Ee \otimes \Ff}$ of $L_{\Ee \otimes \Ff}$ and the morphism $\psi_{\Ee \otimes \Ff} : L_{\Ee \otimes \Ff} \to L_{\Ee \otimes \Ff} \otimes p_{\Ee \otimes \Ff}^*K_X$ given by tensoring with the tautological section $\lambda_{\Ee \otimes \Ff}$ of $p_{\Ee \otimes \Ff}^* K_X$. 

Note that $p_{\Ee \otimes \Ff}$ fits in the commuting diagram 
\[
\xymatrix{
S_{\Ee} \times_X S_{\Ff} \ar[rd]_{\pi} \ar[rr]^{\sigma} & & S_{\Ee \otimes \Ff} \ar[ld]^{p_{\Ee \otimes \Ff}}
\\
 & X, & 
}
\]
while $\pi$ fits in 
\begin{equation} \label{eq commutative diagram}
  \begin{gathered}
    \xymatrix{
      & S_{\Ee} \times_X S_{\Ff} \ar[ld]_{q_\Ee}^{m}
      \ar[rd]^{q_\Ff}_{n}
      \ar[dd]^{\pi}_{nm} \\ 
      S_\Ee \ar[rd]_{p_\Ee}^{n}
      & & S_\Ff \ar[ld]^{p_\Ff}_{m} \\
      & X,
    }    
  \end{gathered}
\end{equation}
which is commutative and its exterior is Cartesian. 

Denote by $\lambda_\Ee$ the tautological section of $p_\Ee^*K_X$ over $S_\Ee$ and by $\lambda_\Ff$ the tautological section of $p_\Ff^*K_X$ over $S_\Ff$. In view of the previous commutative diagrams, one has that $\sigma^* \lambda_{\Ee \otimes \Ff}$, $q_\Ee^*\lambda_\Ee$ and $q_\Ff^*\lambda_\Ff$ are all sections of $\pi^*K_X$, and one has the equality
\[
\sigma^*\lambda_{\Ee \otimes \Ff} = q_\Ee^* \lambda_\Ee + q_\Ff^* \lambda_\Ff.
\]
Then, considering the morphism 
\[
\widetilde{\psi} : q_\Ee^*L_\Ee \otimes q_\Ff^*L_\Ff \longrightarrow q_\Ee^*L_\Ee \otimes q_\Ff^*L_\Ff \otimes \pi^* K_X
\]
given by tensorization under $\sigma^*\lambda_{\Ee \otimes \Ff}$, one has the decomposition
\[
\widetilde{\psi} = \widetilde{\psi}_\Ee + \widetilde{\psi}_\Ff,
\]
where we define
\[
\widetilde{\psi}_\Ee :  q_\Ee^*L_\Ee \otimes q_\Ff^*L_\Ff \longrightarrow q_\Ee^*L_\Ee \otimes q_\Ff^*L_\Ff \otimes \pi^* K_X
\]
to be the tensorization under $q_\Ee^*\lambda_{\Ee}$, and similarly for $\widetilde{\psi}_\Ff$. Denoting by
\[
\psi_\Ee : L_\Ee \longrightarrow L_\Ee \otimes p_\Ee^* K_X
\]
the morphism obtained by tensorizarion by $\lambda_\Ee$, one has that
\[
\widetilde{\psi}_\Ee = q_\Ee^* \psi_\Ee \otimes \Id_{q_\Ff^* L_\Ff}.
\]
One can define $\psi_\Ff$ analogously, obtaining
\begin{equation} \label{eq wtpsi_Ff}
\widetilde{\psi}_\Ff \cong \Id_{q_\Ee^* L_\Ee} \otimes q_\Ff^* \psi_\Ff.
\end{equation}

Since $\sigma \circ p_{\Ee \otimes \Ff} = \pi = q_\Ff \circ p_\Ff$, 
\begin{align*}
p_{\Ee \otimes \Ff\!\!,\, *} L_{\Ee \otimes \Ff} & \cong p_{\Ee \otimes \Ff\!\!,\, *} \sigma_* (q_\Ee^*L_\Ee \otimes q_\Ff^*L_\Ff)
\\
& \cong \pi_* (q_\Ee^*L_\Ee \otimes q_\Ff^*L_\Ff) 
\\
& \cong p_{\Ff\!\!,\, *} q_{\Ff,*} (q_\Ee^*L_\Ee \otimes q_\Ff^*L_\Ff).
\end{align*}
Note that we obtain a similar expression for the last line using $\pi = q_\Ee \circ p_\Ee$. Also, observe that 
\[
p_{\Ee \otimes \Ff\!\!,\, *} \psi_{\Ee \otimes \Ff} \cong \pi_* \widetilde{\psi} \cong p_{\Ee,*} q_{\Ee,*} \widetilde{\psi}_\Ee + p_{\Ff\!\!,\, *} q_{\Ff\!\!,\, *} \widetilde{\psi}_\Ff.
\]
From now on, we shall focus only on the study of $p_{\Ff\!\!,\, *} q_{\Ff\!\!,\, *} \widetilde{\psi}_\Ff$ as the description of $p_{\Ee,*} q_{\Ee,*} \widetilde{\psi}_\Ee$ is completely analogous.

As $L_\Ff$ is a line bundle, the projection formula further gives
\[
p_{\Ee \otimes \Ff\!\!,\, *} L_{\Ee \otimes \Ff} \cong p_{\Ff\!\!,\, *} ((q_{\Ff\!\!,\, *} q_\Ee^*L_\Ee) \otimes L_\Ff)
\]
and, after \eqref{eq wtpsi_Ff},
\[
p_{\Ff\!\!,\, *} q_{\Ff\!\!,\, *} \widetilde{\psi}_\Ff \cong p_{\Ff\!\!,\, *} \left ( \Id_{(q_{\Ff\!\!,\, *}q_\Ee^* L_\Ee)} \otimes  \psi_\Ff \right ).
\]

Since the exterior arrows of \eqref{eq commutative diagram} provide a Cartesian diagram, and $p_\Ff$ is flat, flat base change allow us to identify
\begin{align*}
p_{\Ee \otimes \Ff\!\!,\, *} L_{\Ee \otimes \Ff} & \cong p_{\Ff\!\!,\, *} ((p_\Ff^* p_{\Ee,*} L_\Ee) \otimes L_\Ff)
\\
& \cong p_{\Ff\!\!,\, *} ((p_\Ff^* E) \otimes L_\Ff),
\end{align*}
and 
\[
p_{\Ff\!\!,\, *} q_{\Ff\!\!,\, *} \widetilde{\psi}_\Ff \cong p_{\Ff\!\!,\, *} \left ( \Id_{(p_\Ee^* E)} \otimes  \psi_\Ff \right ) .
\]

As $p_\Ff^* E$ is locally free, applying again the projection formula we obtain the desired equality
\begin{align*}
p_{\Ee \otimes \Ff\!\!,\, *} L_{\Ee \otimes \Ff} & \cong E \otimes p_{\Ff\!\!,\, *} L_\Ff 
\\
& \cong E \otimes F,
\end{align*}
and 
\begin{align*}
p_{\Ff\!\!,\, *} q_{\Ff\!\!,\, *} \widetilde{\psi}_\Ff \cong & \Id_{E} \otimes p_{\Ff\!\!,\, *} \psi_\Ff 
\\
& \cong \Id_{E} \otimes \phi.
\end{align*}
Similarly, one obtains
\[
p_{\Ee,*} q_{\Ee,*} \widetilde{\psi}_\Ee \cong \varphi \otimes \Id_F,
\]
hence
\[
p_{\Ee \otimes \Ff,\!\!,\, *} \psi_{\Ee \otimes \Ff} \cong \varphi \otimes \Id_F + \Id_{E} \otimes \phi,
\]
and the proof is concluded.
\end{proof}

Consider the semistable Higgs bundle $\Ee = (E, \varphi)$ associated to the spectral data $(L_\Ee, S_\Ee)$, where $L_\Ee$ is a line bundle. After Proposition \ref{pr description of Sigma_Phi}, the morphism
\[
\morph{\Jac^{\,\delta_{m}}_{B'_{m}}(\Ss'_{m})}{\overline{\Jac}^{\delta_{nm}}_{B_{nm}}(\Ss_{nm})}{L_\Ff \to S_\Ff}{\sigma_*(q_\Ee^*L_\Ee \otimes q_\Ff^*L_\Ff) \to S_{\Ee \otimes \Ff},}{}{\hat{\tau}^\Ee_m}
\]
corresponds to $\tau^\Ee_m$.

\begin{corollary} \label{co description of hat tau}
For every semistable Higgs bundle $\Ee = (E, \varphi)$ of rank $n$, the diagram
\[
\xymatrix{
\Jac_{B'_m}^{\, \delta_m}(\Ss'_m) \ar[d]_{p_{m,*}}^{\cong} \ar[rr]^{\hat{\tau}^\Ee_m} & & \overline{\Jac}_{B_{nm}}^{\delta_{nm}}(\Ss_{nm}) \ar[d]^{p_{nm, *}}_{\cong} 
\\
\M'_m \ar[rr]^{\tau^\Ee_m} & & \M_{nm},
}
\]
commutes.
\end{corollary}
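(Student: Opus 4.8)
The plan is to read off the statement from Proposition~\ref{pr description of Sigma_Phi} by a pointwise diagram chase, and then promote the resulting equality of closed points to an equality of morphisms. First I would fix a closed point of $\Jac_{B'_m}^{\,\delta_m}(\Ss'_m)$, that is a line bundle $L_\Ff$ on a spectral curve $S_\Ff\subset\Tot(K_X)$; since $S_\Ff$ lies over $B'_m$ it is smooth, hence reduced, so the hypotheses of Proposition~\ref{pr description of Sigma_Phi} are satisfied (the extra condition $S_\Ee\ne S_\Ff$ when $n=m$ excludes only the proper closed subset of $\Jac_{B'_m}^{\,\delta_m}(\Ss'_m)$ lying over the Hitchin image of $\Ee$). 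Going down and then right in the square, $L_\Ff$ is sent to $\Ff=(F,\phi)=p_{m,*}(L_\Ff\to S_\Ff)$ via the identification~\eqref{eq M_n = Jac_B of S}, and then $\tau^\Ee_m$ sends it to $[\Ee\otimes\Ff]=[(E\otimes F,\ \varphi\otimes\Id_F+\Id_E\otimes\phi)]$ by the very definition~\eqref{eq definition of tau}. Going right and then down, $\hat{\tau}^\Ee_m$ sends $L_\Ff\to S_\Ff$ to $\eta_{\Ee\otimes\Ff,*}(q_\Ee^*L_\Ee\otimes q_\Ff^*L_\Ff)\to S_{\Ee\otimes\Ff}$, and then $p_{nm,*}$ is applied. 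Thus the entire content is that the pair $\big(S_{\Ee\otimes\Ff},\ \eta_{\Ee\otimes\Ff,*}(q_\Ee^*L_\Ee\otimes q_\Ff^*L_\Ff)\big)$ is exactly the spectral datum of $\Ee\otimes\Ff$ --- which is the final assertion of Proposition~\ref{pr description of Sigma_Phi}.

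Since that assertion, as proved there, verifies directly only the underlying bundle $p_{\Ee\otimes\Ff,*}L_{\Ee\otimes\Ff}\cong E\otimes F$, the one point I would spell out in more detail is the matching of Higgs fields. By~\eqref{eq lambda_2 is the tautological} the tautological section $\lambda$ restricted to $S_{\Ee\otimes\Ff}=\sigma(S_\Ee\times_X S_\Ff)$ is $\lambda_2$, and pulling back along $\eta_{\Ee\otimes\Ff}$ and using~\eqref{eq description of wt lambda} together with~\eqref{eq description of lambda_E} one obtains, on $\widetilde S_{\Ee\otimes\Ff}$, the decomposition $\eta_{\Ee\otimes\Ff}^*\lambda = q_\Ee^*\big(\lambda|_{S_\Ee}\big) + \gamma$, where $\gamma$ is the tautological section exhibiting $\widetilde S_{\Ee\otimes\Ff}$ as the pulled-back spectral cover of $S_\Ff$ over $S_\Ee$. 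Pushing the multiplication by each summand on $q_\Ee^*L_\Ee\otimes q_\Ff^*L_\Ff$ down to $X$ through the flat morphisms of the diagram~\eqref{eq commutative diagram} --- using the projection formula and flat base change exactly as in the bundle computation of Proposition~\ref{pr description of Sigma_Phi} --- turns $q_\Ee^*(\lambda|_{S_\Ee})$ into $\varphi\otimes\Id_F$ and $\gamma$ into $\Id_E\otimes\phi$, so the push-forward of $\Id_{L_{\Ee\otimes\Ff}}\otimes\lambda|_{S_{\Ee\otimes\Ff}}$ is the Leibniz Higgs field $\varphi\otimes\Id_F+\Id_E\otimes\phi$, as needed. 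Hence the two compositions agree on the dense open locus of $\Jac_{B'_m}^{\,\delta_m}(\Ss'_m)$ where Proposition~\ref{pr description of Sigma_Phi} applies.

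Finally, to obtain commutativity of the square as a diagram of morphisms of schemes rather than merely on closed points, I would argue that $\Jac_{B'_m}^{\,\delta_m}(\Ss'_m)$ is reduced and $\M_{nm}\cong\overline{\Jac}_{B_{nm}}^{\,\delta_{nm}}(\Ss_{nm})$ is separated, so that two morphisms into it that coincide on a dense set of points coincide; alternatively one can run Proposition~\ref{pr description of Sigma_Phi} in families over $B'_m$ --- identifying the relative spectral curve of $\Ee$ tensored with the universal family of smooth rank-$m$ spectral data with $\sigma$ applied to the relevant fibre product --- which yields the scheme-level identification at once. I expect the genuine obstacle to sit inside Proposition~\ref{pr description of Sigma_Phi} rather than in the corollary itself: namely the bookkeeping that tracks the tautological section across the partial desingularization $\eta_{\Ee\otimes\Ff}$ and checks that its push-forward is compatible with the isomorphism $E\otimes F\cong p_{\Ee\otimes\Ff,*}L_{\Ee\otimes\Ff}$ used there, so that the twisted-endomorphism structures on the two sides really do match; the passage from closed points to schemes is, by comparison, routine.
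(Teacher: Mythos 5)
Your proposal is correct and follows the same route the paper takes: the corollary is stated there with no separate proof, being read off directly from Proposition~\ref{pr description of Sigma_Phi}, exactly as in your pointwise diagram chase. Your extra care in matching the Higgs fields via the decomposition $\eta_{\Ee\otimes\Ff}^*\lambda = q_\Ee^*(\lambda|_{S_\Ee}) + \gamma$ (and in handling the excluded fibre when $n=m$ by density and separatedness) supplies detail that the paper leaves implicit, since its proof of the proposition only verifies the underlying bundle isomorphism $p_{\Ee\otimes\Ff,*}L_{\Ee\otimes\Ff}\cong E\otimes F$.
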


Finally, we study the relation of $\tau^\Ee_m$ and the holomorphic $2$-forms $\Lambda^1_{m}$ and $\Lambda_{nm}^1$.

\begin{lemma} \label{lm tau and Lambda}
One has that
\[
\tau^{\Ee, *}_m \Lambda^1_{nm} (\cdot, \cdot) \cong n \Lambda^1_m  (\cdot, \cdot). 
\]
\end{lemma}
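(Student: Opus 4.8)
The plan is to compute both sides on a pair of tangent vectors at a generic point $[\Ff] \in \M_m'$ corresponding to smooth spectral data $L_\Ff \to S_\Ff$, and to use the explicit description \eqref{eq wt Lambda^1} of $\Lambda^1$ together with the spectral description of $\tau^\Ee_m$ from Proposition \ref{pr description of Sigma_Phi} and Corollary \ref{co description of hat tau}. First I would recall that the holomorphic symplectic form $\Lambda^1_k$ on the smooth locus of $\M_k$ is, up to the universal constant in \eqref{eq wt Lambda^1}, the canonical symplectic form on $T^*\Jac$; concretely, under the identification $T_{[\Ee]}\M_k' \cong H^1(S,\Oo_S) \oplus H^0(S, K_S)$ of Section \ref{sc mirror} (where $\rho_k = \Lambda^1_k$), one has $\Lambda^1_k\big((\dot\beta_1,\dot\beta_1^*),(\dot\beta_2,\dot\beta_2^*)\big) = \int_S \big(\dot\beta_1 \wedge \dot\beta_2^* - \dot\beta_2 \wedge \dot\beta_1^*\big)$, the natural pairing between $H^1(S,\Oo_S)$ and $H^0(S,K_S)$. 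So the statement reduces to comparing this pairing on $S_{\Ee\otimes\Ff}$ (pulled back) with $n$ times the pairing on $S_\Ff$.

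Next I would differentiate $\hat\tau^\Ee_m$. By Proposition \ref{pr description of Sigma_Phi}, on spectral data it sends $L_\Ff \mapsto \eta_{\Ee\otimes\Ff,*}(q_\Ee^* L_\Ee \otimes q_\Ff^* L_\Ff)$; since $\Ee$ (hence $L_\Ee$) is fixed, the derivative at $[\Ff]$ is obtained from the composite $H^1(S_\Ff, \Oo_{S_\Ff}) \xrightarrow{q_\Ff^*} H^1(\widetilde S_{\Ee\otimes\Ff}, \Oo) \xrightarrow{\eta_{\Ee\otimes\Ff,*}} H^1(S_{\Ee\otimes\Ff},\Oo_{S_{\Ee\otimes\Ff}})$ on the $H^1(\Oo)$-part, and dually (via Serre duality and the isomorphism \eqref{eq compactified Jacobian is trivial}) on the $H^0(K)$-part. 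So if $\dot\beta \in H^1(S_\Ff,\Oo_{S_\Ff})$ then $d\hat\tau^\Ee_m(\dot\beta) = \eta_{\Ee\otimes\Ff,*}\, q_\Ff^* \dot\beta$, and for a cotangent direction $\dot\gamma^* \in H^0(S_{\Ee\otimes\Ff}, K_{S_{\Ee\otimes\Ff}})$ the pullback under $\tau^\Ee_m$ on cotangent vectors is the adjoint, i.e. pullback $q_\Ff^* \eta_{\Ee\otimes\Ff}^*$ followed by trace/push-forward along $q_\Ff$. Plugging these into the pairing, $\tau^{\Ee,*}_m\Lambda^1_{nm}(\dot\beta_1,\dot\beta_2)$ becomes an integral over $S_{\Ee\otimes\Ff}$ of expressions built from $\eta_{\Ee\otimes\Ff,*} q_\Ff^* \dot\beta_i$ and the corresponding $H^0(K)$-terms.

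Then the computation is a projection-formula / degree-of-the-cover argument: since $\eta_{\Ee\otimes\Ff}$ is birational it does not change the integral, so the integral over $S_{\Ee\otimes\Ff}$ equals the integral over $\widetilde S_{\Ee\otimes\Ff}$ of $q_\Ff^*\dot\beta_1 \cup q_\Ff^*(\text{dual of }\dot\beta_2) - (\cdots)$, and because $q_\Ff : \widetilde S_{\Ee\otimes\Ff} \to S_\Ff$ is a finite flat morphism of degree $n$ (see \eqref{eq commutative diagram}), the projection formula gives $\int_{\widetilde S_{\Ee\otimes\Ff}} q_\Ff^*\omega = n \int_{S_\Ff}\omega$ for any top-degree form $\omega$ pulled back from $S_\Ff$. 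This yields exactly the factor $n$, i.e. $\tau^{\Ee,*}_m\Lambda^1_{nm}(\cdot,\cdot) = n\,\Lambda^1_m(\cdot,\cdot)$. I would also remark that this is consistent with Remark \ref{rm tau hyperholomorphic}: $\tau^\Ee_m$ is holomorphic for $\Gamma^1$, so the pullback of the $(2,0)$-form $\Lambda^1_{nm}$ is again of type $(2,0)$, hence proportional (after checking it is closed and nondegenerate up to the kernel) to $\Lambda^1_m$, and the constant is pinned down by the rank of the cover.

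The main obstacle I anticipate is bookkeeping the two halves of the tangent space correctly: one must verify that $d\hat\tau^\Ee_m$ really acts as $\eta_{\Ee\otimes\Ff,*}q_\Ff^*$ on $H^1(\Oo)$ and as the transpose map on the $H^0(K)$ Serre-dual side in a way compatible with the choice of base-point section \eqref{eq compactified Jacobian is trivial} used to trivialize the relative Jacobian, so that no spurious shifts by $d\hat\tau^\Ee_m$ of the fixed factor $L_\Ee$ enter (they do not, since $L_\Ee$ is constant in the family, but this needs a clean statement). A secondary technical point is justifying that one may restrict attention to the reduced generic locus and that the desingularization $\eta_{\Ee\otimes\Ff}$ is harmless for the integration — both follow because $\Lambda^1$ is determined on a Zariski-dense open set and $\eta_{\Ee\otimes\Ff}$ is an isomorphism away from a divisor of positive codimension in each fibre, over which the push-forward of line bundles is unaffected at the level of $H^1$ and $H^0$ of the relevant sheaves.
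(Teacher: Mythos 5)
Your strategy --- computing $\Lambda^1$ through the spectral correspondence and extracting the factor $n$ from the degree of the cover $q_\Ff:\widetilde S_{\Ee\otimes\Ff}\to S_\Ff$ --- is genuinely different from the paper's, and it has a gap that is not just bookkeeping. You want to use the identification of $\Lambda^1_{nm}$ with the canonical pairing on $H^1(S,\Oo_S)\oplus H^0(S,K_S)$ from Section \ref{sc mirror}, but that identification is only established over the smooth locus $\M'_{nm}$, i.e.\ for smooth spectral curves and line bundles on them. By Proposition \ref{pr description of Sigma_Phi} the curve $S_{\Ee\otimes\Ff}$ is always \emph{singular} and the spectral sheaf $\eta_{\Ee\otimes\Ff,*}(q_\Ee^*L_\Ee\otimes q_\Ff^*L_\Ff)$ is a pushforward from the partial normalization, hence not a line bundle on $S_{\Ee\otimes\Ff}$; indeed the paper remarks explicitly that $\image(\tau^\Ee_m)$ lies outside $\M'_{nm}$. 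So the tangent space at $\Ee\otimes\Ff$ is not $H^1(S_{\Ee\otimes\Ff},\Oo)\oplus H^0(S_{\Ee\otimes\Ff},K)$ in the form you use it, and the formula you pull back is not available at the points where you need it. The cotangent-side claim (that $d\hat\tau^\Ee_m$ acts ``dually'' on the $H^0(K)$ factor) is also asserted rather than derived: the base direction corresponds to deformations of the spectral curve inside $\Tot(K_X)$, and relating the normal field of $S_{\Ee\otimes\Ff}=\sigma(S_\Ee\times_X S_\Ff)$ to that of $S_\Ff$ through $\sigma$ and the ramified, non-\'etale map $\eta_{\Ee\otimes\Ff}$ requires an argument (note $q_\Ff^*K_{S_\Ff}\ne K_{\widetilde S_{\Ee\otimes\Ff}}$). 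The argument could probably be repaired by working with $\Ext^1(L,L)$ on the singular curve or with the hypercohomology description of $\theta$ as in the proof of Theorem \ref{tm Xi^Ee Lagrangian}, but as written it rests on a description that fails exactly on the image of $\tau^\Ee_m$.

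For contrast, the paper's proof is two lines and avoids spectral curves entirely: $\tau^\Ee_m$ intertwines all three complex structures (Remark \ref{rm tau hyperholomorphic}), and $\tau^{\Ee,*}_m g_{nm}=n\,g_m$ because in the gauge-theoretic formula for the metric the tangent vectors pull back as $\Id_E\otimes\dot\alpha$, $\Id_E\otimes\dot\varphi$, and $\tr\bigl((\Id_E\otimes a)\wedge(\Id_E\otimes b)\bigr)=n\,\tr(a\wedge b)$. Since $\Lambda^1=g(\cdot,\Gamma^2\cdot)+\ii\,g(\cdot,\Gamma^3\cdot)$, the scaling of the metric by $n=\rk(\Ee)$ immediately gives $\tau^{\Ee,*}_m\Lambda^1_{nm}=n\,\Lambda^1_m$. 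It is worth internalizing that the factor $n$ here is the rank of $E$ appearing through a trace, which your cover-degree heuristic recovers only because $\deg q_\Ff=\rk E$.
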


\begin{proof}
Recall that $g_{nm}$ is obtained from the Gauge invariant metric \eqref{eq HB metric}. Observe that the isomorphism $\EE_{nm} \cong \EE_n \otimes \EE_m$ induces the isomorphism of vector bundles 
\begin{equation} \label{eq Omega_nm = Omega_n x Omega_m}
\underline{\Omega}_{nm} \cong \left(\Omega_X^{1,0}(\EE_n \otimes \EE_m)\oplus\Omega_X^{0,1}(\EE_n \otimes \EE_m)\right),
\end{equation}
and note that the infinitesimal deformations of the Higgs bundles contained in the image of $\tau^\Ee_m$ are of the form $\Id_E \otimes \dot{\alpha}_m \in \Omega_X^{0,1}(\End(\EE_{nm}))$ and $\Id_E \otimes \dot{\varphi}_m \in \Omega_X^{1,0}(\End(\EE_{nm}))$. Then, one can easily check that $\tau^{\Ee, *}_m g_{nm} = \rk(\Ee) g_m$.

Since $\tau^\Ee_m$ commutes with all the complex structures $\Gamma^1$, $\Gamma^2$ and $\Gamma^3$, we have 
\begin{align*}
\tau^{\Ee, *}_m \Lambda^1_{nm} (\cdot, \cdot) & = \tau^{\Ee, *}_m \omega^2_{nm}(\cdot, \cdot) + \ii \tau^{\Ee, *}_m \omega^3_{nm}(\cdot, \cdot)
\\
& = \tau^{\Ee, *}_m g_{nm}(\cdot, \Gamma^2(\cdot)) + \ii \tau^{\Ee, *}_m g_{nm}(\cdot, \Gamma^3(\cdot))
\\
& = (\tau^{\Ee, *}_m g_{nm})(\cdot, \Gamma^2(\cdot)) + \ii (\tau^{\Ee, *}_m g_{nm})(\cdot, \Gamma^3(\cdot))
\\
& = n g_{m}(\cdot, \Gamma^2(\cdot)) + \ii n g_{m}(\cdot, \Gamma^3(\cdot))
\\
& = n \omega^2_{m}(\cdot, \cdot) + \ii n \omega^3_{m}(\cdot, \cdot)
\\
& = n \Lambda^1_{m}(\cdot, \cdot).
\end{align*}
\end{proof}

%------------------------------------------------------------------
%------------------------------------------------------------------

\section{Nahm transform of high rank}
\label{sc high rank Nahm}

In \cite{jardim_survey}, the Nahm transform is constructed for any a family of self-dual connections over a spin four-manifold with non-negative scalar curvature. The construction produces a vector bundle with connection over the parametrizing space of the family once we consider, at each point of the parametrizing space, the cokernel of the associated Dirac operator. It is also possible to define naturally a connection on this bundle. The Nahm transform for Higgs bundles is defined in \cite{frejlich&jardim} considering, for each stable Higgs bundle, the family obtained by twisting with the universal family of rank $1$ Higgs bundles. Hence, for each stable Higgs bundle, we obtain a Hermitian connection over $\M_1$ of type $(1,1)$ with respect to the complex structures $\Gamma^1_1$, $\Gamma^2_1$ and $\Gamma^3_1$. 

Here, we generalize the previous construction to moduli spaces of stable Higgs bundles of arbitrary rank. The main difference with the rank $1$ case relies in the fact that there is no universal bundle over $X \times \M_n^\st$ for $n > 1$, not even Zariski locally. Therefore, as we did in Section \ref{sc Dirac-Higgs}, we shall work with the gerbe $\beta_n$ in the \'etale topology of $\M_n$, and make use of the definition of $\beta_n$-twisted bundles. 

Fix a Higgs bundle $\Ee = (E, \varphi)$ of rank $n$ and degree $0$, supported on the Hermitian $C^\infty$ vector bundle $\EE_n$. For every rank $m$ Higgs bundle $\Ff = (F, \phi)$ with $\deg(F) = 0$, supported on the Hermitian $C^\infty$ vector bundle $\EE_m$, we can consider the Higgs bundle $\Ee \otimes \Ff = (E \otimes F, \varphi \otimes \Id_F + \Id_F \otimes \phi)$ on $\EE_n \otimes \EE_m \cong \EE_{nm}$. If $\Ee$ and $\Ff$ are semistable, it is well known that $\Ee \otimes \Ff$ is semistable, although, for $m > 1$, such correspondence is no longer valid when we replace semistability with stability. 

We recall that a crucial step in the construction of the Dirac--Higgs bundle is Hausel's vanishing statement \cite[Corollary 5.1.4]{hausel} which ensures that $\ker \DDd_{\Ee}=0$ whenever $\Ee$ is stable and different from the trivial Higgs bundle of degree 0 (where by trivial Higgs bundle we mean the pair $(\Oo_X,0)$ given by the trivial line bundle and zero Higgs field). Note that $\HH^0(\mathcal{O}_X,0)=\HH^2(\mathcal{O}_X,0)=\CC$. Since we intend to study the Dirac operator over the locus of Higgs bundles obtained as a tensor product, we need to study first whether or not one can generalize Hausel's vanishing statement to this locus. This justifies the following definition.

If $\Ee$ be a semistable Higgs bundle of degree 0, let ${\rm
  gr}(\Ee)=\oplus_j \Gg_j$ be the associated graded object, where
$(\Gg_1,\dots,\Gg_l)$ are the stable factors of its Jordan--H\"older
filtration; we say that $\Ee$ is \emph{without trivial factors}
%(or w.t.f., for short)
if none of these factors $\Gg_j$ is the trivial
Higgs bundle $(\mathcal{O}_X,0)$. Clearly, every nontrivial stable
Higgs bundle is without trivial factors.

\begin{lemma}\label{sst ker D}
Let $\Ee$ be a semistable Higgs bundle of degree 0. If $\Ee$ is
without trivial factors, then
\[
  \HH^0(\Ee)=\HH^2(\Ee)=0
  \qquad\text{and}\qquad
  \ker \DDd_{\Ee}^* = \HH^1(\gr(\Ee))
  \simeq \oplus_j \HH^1(\Gg_j),
\]
where $\Gg_j$ are the stable factors of the Jordan--H\"older
filtration of $\Ee$.  In addition, $\Ee$ is without trivial
factors if and only if $\ker \DDd_{\Ee}=0$.
\end{lemma}

%{\color{red}
%\noindent\textbf{Lemma} (old version).
%Let $\Ee$ be a semistable Higgs bundle of degree 0. If $\Ee$ is
%without trivial factors, then $\HH^0(\Ee)=\HH^2(\Ee)=0$.  Then 
%$\ker \DDd_{\Ee}=0$ if and only if $\Ee$ is without trivial
%factors. In addition, if $\Ee$ is without trivial factors, then
%\[
%  \ker \DDd_{\Ee}^* = \HH^1(\gr(\Ee))
%  \simeq \oplus_j \HH^1(\Ee_j),
%\]
%where $\Ee_j$ are the stable factors of the Jordan--H\"older
%filtration of $\Ee$. 
%}

\begin{proof}
Let $\Ee$ is a semistable Higgs bundle of degree 0 and let
$$ 0=\Ee_0 \subset \Ee_1\subset \cdots \subset \Ee_l=\Ee $$
be its Jordan--H\"older filtration, and let $\Gg_j=\Ee_j/\Ee_{j-1}$ be
its factors. We prove the first claim by induction on the length $l$
of the Jordan--H\"older filtration.

If $l=1$, then $\Ee$ is stable and the claim is just the corollary due to Hausel mentioned above. For the induction step, assume that the claim holds when the $l=k-1$ and consider the short exact sequence of Higgs bundles
$$ 0 \to \Ee_{l-1} \to \Ee_l=\Ee \to \Gg_l \to 0. $$
If $\Ee$ is without trivial factors, then $\Gg_l$ is a nontrivial stable Higgs bundle, and $\Ee_{l-1}$ is also without trivial factors, so that, by the induction hypothesis
$$ \HH^0(\Ee_{l-1})=\HH^2(\Ee_{l-1})=\HH^0(\Gg_l)=\HH^2(\Gg_l)=0. $$
The associated long exact sequence in hypercohomology then yields
$$ \HH^0(\Ee)=\HH^2(\Ee)=0  ~~ {\rm and} ~~  \HH^1(\Ee)=\HH^1(\Ee_{l-1})\oplus\HH^1(\Gg_l)= \bigoplus_{j=1}^l \HH^1(\mathcal{G}_j). $$
In particular, we also conclude that if $\Ee$ is without trivial factors, then $\ker\DDd_{\Ee}=0$.

For the converse statement, assume that one of the factors of the Jordan--H\"older filtration, say $\Gg_j$ with $j\in\{1,\dots,l\}$, is trivial. Considering the exact sequence
$$ 0 \to \Ee_{j-1} \to \Ee_j \to \Gg_j \to 0, $$
we obtain a surjective map $\HH^2(\Ee_j)\twoheadrightarrow \HH^2(\Gg_j)=\CC$, thus $\ker\DDd_{\Ee_j}\ne0$. The monomorphism $\Ee_j\hookrightarrow \Ee$ then provides an injective map $\ker\DDd_{\Ee_j}\hookrightarrow\ker\DDd_{\Ee}$, proving that $\ker\DDd_{\Ee}\ne0$ as well.
\end{proof}

Lemma \ref{sst ker D} shows that the rank of $\ker \DDd^*_\Ee$ does not jump if we remain inside the locus of Higgs bundle without trivial factors, but it will as long as we leave this locus. Below we find conditions under which $\Ee \otimes \Ff$ is without trivial factors.

\begin{lemma} \label{lm contained in Mwtf}
Let $\Ee = (E, \varphi)$ and $\Ff = (F, \phi)$ be semistable Higgs bundles of degree $0$ and rank $n$ and $m$, respectively.
\begin{enumerate}
\item if $n>m$ and $\Ee$ is stable, then $\Ee \otimes \Ff$ is without trivial factors;
\item if $n<m$ and $\Ff$ is stable, then $\Ee \otimes \Ff$ is without trivial factors;
\item if $n=m$, $\Ee$ and $\Ff$ are stable and $\Ff \ncong \Ee^*$, then $\Ee \otimes \Ff$ is without trivial factors.
\end{enumerate}
\end{lemma}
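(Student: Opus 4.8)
The plan is to reduce the statement to a vanishing of $\Hom$-spaces between a stable and a semistable Higgs bundle of different ranks, passing through the extreme hypercohomology groups of the Higgs complex of $\Ee\otimes\Ff$. Since $\Ee$ and $\Ff$ are semistable of degree $0$, so is $\Ee\otimes\Ff$; hence, by Lemma~\ref{sst ker D} together with the identification $\ker\DDd_{\Ee\otimes\Ff}\simeq\HH^0(\Ee\otimes\Ff)\oplus\HH^2(\Ee\otimes\Ff)$ recalled at the beginning of Section~\ref{DH bdl}, it suffices to show that $\HH^0(\Ee\otimes\Ff)=0$ and $\HH^2(\Ee\otimes\Ff)=0$. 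Using the tensor--hom adjunction in the abelian category of semistable degree-$0$ Higgs bundles one has
$$ \HH^0(\Ee\otimes\Ff)=\Hom_{\rm Higgs}\big((\Oo_X,0),\Ee\otimes\Ff\big)\cong\Hom_{\rm Higgs}(\Ff^\vee,\Ee), $$
where $\Ff^\vee=(F^*,-\phi^t)$ is the dual Higgs bundle, and applying Serre duality for the two-term Higgs complex on $X$, which gives $\HH^2(\Gg)\cong\HH^0(\Gg^\vee)^*$ for any Higgs bundle $\Gg$, one gets likewise $\HH^2(\Ee\otimes\Ff)\cong\HH^0(\Ee^\vee\otimes\Ff^\vee)^*\cong\Hom_{\rm Higgs}(\Ff,\Ee^\vee)^*$. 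Note that $\Ee^\vee$ and $\Ff^\vee$ are again semistable of degree $0$, of the same ranks $n$ and $m$, and that $\Ee^\vee$ (resp. $\Ff^\vee$) is stable whenever $\Ee$ (resp. $\Ff$) is.

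The heart of the argument is the following elementary observation, which I would isolate first: if $\Gg_1$ is a \emph{stable} Higgs bundle of slope $0$ and rank $r_1$ and $\Gg_2$ a semistable Higgs bundle of slope $0$ and rank $r_2<r_1$, then $\Hom_{\rm Higgs}(\Gg_1,\Gg_2)=\Hom_{\rm Higgs}(\Gg_2,\Gg_1)=0$. Indeed, the image of a nonzero Higgs morphism between $\Gg_1$ and $\Gg_2$ is, in either direction, simultaneously a Higgs subsheaf of a semistable slope-$0$ Higgs bundle (hence of slope $\le 0$) and a Higgs quotient of a semistable slope-$0$ Higgs bundle (hence of slope $\ge 0$), so it has slope exactly $0$; but its rank is at most $r_2<r_1$, so it is a proper Higgs quotient of $\Gg_1$ (in the first direction) or a proper Higgs subsheaf of $\Gg_1$ (in the second), which by stability of $\Gg_1$ would force its slope to be strictly positive, respectively strictly negative --- a contradiction.

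It then remains to apply this observation. In case~(1), $n>m$ and $\Ee$, hence also $\Ee^\vee$, is stable of rank $n$, while $\Ff^\vee$ and $\Ff$ are semistable of rank $m<n$; taking $\Gg_1=\Ee$, $\Gg_2=\Ff^\vee$ and $\Gg_1=\Ee^\vee$, $\Gg_2=\Ff$ in the observation gives $\Hom_{\rm Higgs}(\Ff^\vee,\Ee)=0$ and $\Hom_{\rm Higgs}(\Ff,\Ee^\vee)=0$, so $\HH^0(\Ee\otimes\Ff)=\HH^2(\Ee\otimes\Ff)=0$ and $\Ee\otimes\Ff$ is without trivial factors. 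Case~(2) follows in exactly the same way after interchanging the roles of $\Ee$ and $\Ff$ (equivalently, applying case~(1) to the isomorphic Higgs bundle $\Ff\otimes\Ee$). I expect the only point requiring care to be the bookkeeping in the first paragraph: checking that the adjunction and Serre-duality isomorphisms are set up with the correct dual Higgs fields and signs, so that $\HH^0$ and $\HH^2$ of $\Ee\otimes\Ff$ genuinely become $\Hom_{\rm Higgs}(\Ff^\vee,\Ee)$ and $\Hom_{\rm Higgs}(\Ff,\Ee^\vee)^*$; once this is in place, everything else is a slope count.
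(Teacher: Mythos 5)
Your proof is correct and is essentially the paper's own argument in different packaging: the paper likewise converts a trivial factor of $\Ee\otimes\Ff$ into a nonzero Higgs morphism $\psi:\Ee^\vee\to\Ff$ and derives the contradiction from the fact that $\ker\psi$ and $\im \psi$ are degree-$0$ invariant subsheaves, one of which must be proper when $n\neq m$, violating the stability of $\Ee^\vee$ or of $\Ff$. The only difference is one of bookkeeping: you pass through $\HH^0$, $\HH^2$, Serre duality and Lemma \ref{sst ker D}, isolating the $\Hom$-vanishing between a stable and a semistable object of different ranks as a separate observation, whereas the paper reads both cases directly off the kernel and image of the single map $\psi$.
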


In particular, if $n\ne m$ and both $\Ee$ and $\Ff$ are stable, then $\Ee \otimes \Ff$ is without trivial factors.

\begin{proof}
The (semi)stability of $\Ee$ implies the (semi)stability of $\Ee^* = (E^*, -\varphi^t)$. If $\Ee \otimes \Ff$ has a trivial factor, then there exists $\psi : \Oo_X \to E \otimes F$ such that $(\varphi \otimes \Id_F + \Id_E \otimes \phi)(\psi) = 0$. Equivalently, there exists a nontrivial morphism $\psi : E^* \to F$ such that $(\psi \otimes \Id_{K_X}) \circ (-\varphi^t) = \phi \circ \psi$. As a consequence, the image $\im \psi$ is a $\phi$-invariant sub-sheaf of $F$ and its saturation $\overline{\im \psi}$ a $\phi$-invariant sub-bundle of $F$. Also, the kernel $\ker \psi$ is a $(-\varphi^t)$-invariant bundle of $E$. As $\deg(\overline{\im \psi}) > \deg(\im \psi)$, note that $\deg(\im \psi) = \deg(\ker \psi) = 0$ and $\overline{\im \psi} = \im \psi$ due to the semistability of $\Ee^*$ and $\Ff$ and the fact that both have trivial degree. Hence $\im \psi$ is a vector sub-bundle of $F$. If $n \neq m$, either $\im \psi$ or $\ker \psi$ are proper sub-bundles, contradicting the stability of $\Ff$ or $\Ee^*$, respectively. Finally, suppose that $n = m$, both $\Ee$ and $\Ff$ are stable and $\Ff$ is not isomorphic to the dual $\Ee^*$. Then $\Ee \otimes \Ff$ is without trivial factor, as otherwise the last condition will be violated.
\end{proof}

Denote by $\M_m^{\Ee}$ the open subset of $\M_m^\st$ given by those Higgs bundles $\Ff$ such that $\Ee \otimes \Ff$ is without trivial factors. In virtue of Lemma \ref{lm contained in Mwtf}, we require that $\Ee$ is stable when $\rk(\Ee) \geq m$, and under these conditions  
\begin{itemize}
\item when $\rk(\Ee) \neq m$, one has
    \[
    \M^\Ee_m = \M_m^\st;
    \]
\item while for $\rk(\Ee) = m$, we have
    \[
    \M_m^{\Ee} = \M_m^\st \setminus \{ \Ee^* \}.
    \]

\end{itemize}

Given an integer $m$ and a semistable $\Ee = (E, \varphi)$ ($\Ee$ stable if  $\rk(\Ee) \geq m$), we define, for each $\Ff = (F, \phi)$ in $\M_m^\Ee$, the following Dirac-type operators where we make use of the isomorphism \eqref{eq Omega_nm = Omega_n x Omega_m},
\[
\DDd^\Ee_\Ff := \DDd_{\Ee \otimes \Ff} : \Omega_X^{0}(\EE_n \otimes \EE_m)\oplus\Omega_X^{0}(\EE_n \otimes \EE_m) \longrightarrow \underline{\Omega}_{nm},
\]
and its adjoint
\[
\DDd^{\Ee,*}_\Ff := \DDd^*_{\Ee \otimes \Ff} : \underline{\Omega}_{nm} \longrightarrow \Omega_X^{1,1}(\EE_n \otimes \EE_m)\oplus\Omega_X^{1,1}(\EE_n \otimes \EE_m).
\]
After Lemma \ref{lm contained in Mwtf} and \eqref{eq ker D* fixed}, one has that
\[
\dim \ker \DDd^{\Ee,*}_\Ff = \dim \ker \DDd_{\Ee \otimes \Ff}^*  = 2 n m (g - 1)
\]
is fixed. As we did in the definition of the $\beta_n$-twisted Dirac--Higgs bundle in Section \ref{sc Dirac-Higgs}, take an \'etale covering $\{ Z_{m,i} \to \M_m^\Ee \}_{i \in I}$ which is fine enough for the gerbe $\beta_m$ and recall the $\beta_m$-twisted universal Higgs bundle $(\U_m, \boldsymbol{\Phi}_m) = \{ (U_{m,i}, \Phi_{m,i}) \to X \times Z_{m,i} \}_{i \in I}$ on $X \times \M^\st_m$. For all $i \in I$, consider the families of Dirac-type operators 
\[
\DDd^{\Ee,*}_{(U_{m,i}, \Phi_{m,i})} = \DDd_{\Ee \otimes (U_{m,i}, \Phi_{m,i})} : \underline{\Omega}_{nm} \times Z_{m,i} \longrightarrow \Omega_X^{1,1}(\EE_n \otimes \EE_m)\oplus\Omega_X^{1,1}(\EE_n \otimes \EE_m) \times Z_{m,i},
\]
defined point-wise as we did in \eqref{eq pointwise definition of Dd*_U}, and set 
\[
\widehat{E}_{m,i} := \ker \DDd^{\Ee,*}_{(U_{m,i}, \Phi_{m,i})},
\]
which is a holomorphic rank $2 n m(g - 1)$ bundle over each \'etale open subset $Z_{m,i}  \to \M^\Ee_m$. We consider the \'etale bundle 
\[
\widehat{\E}_m := \left \{ \widehat{E}_{m,i} \to Z_{m,i} \right \}_{i \in I}.
\]

Next, we construct a connection on $\widehat{\E}_m$. Recall that there is a natural metric on $\underline{\Omega}_{nm}$ and consider the induced projection $\pr_{m,i} : \underline{\Omega}_{nm} \times Z_{m,i} \to \widehat{E}_{m,i}$ defined by it. Observe that one naturally has a trivial connection $\underline{d}_{m,i} : \underline{\Omega}_{nm} \times Z_{m,i} \to \underline{\Omega}_{nm} \times Z_{m,i} \otimes K_{m,i}$ on the trivial bundle $\underline{\Omega}_{nm} \times Z_{m,i}$, and consider the embedding $j_{m,i} : \widehat{E}_{m,i} \hookrightarrow \underline{\Omega}_{nm} \times Z_{m,i}$. Define \[
\widehat{\nabla}_{m}^{\Ee} := \pr_{m,i} \circ \underline{d}_{m,i} \circ j_{m,i}
\]
giving the connection on $\widehat{\E}_m$
\[
\widehat{\boldsymbol{\nabla}}_{m}^{\Ee} := \{\widehat{\nabla}_{m}^{\Ee} : E_{m,i} \to E_{m,i} \otimes K_{Z_{m,i}} \}_{i \in I}.
\]
We define the {\it rank $m$ Nahm transform} of the Higgs bundle $\Ee$ as the pair
\begin{equation} \label{eq Nahm transform}
\widehat{\boldsymbol{\Ee}}_m := \left  ( \widehat{\E}_m, \widehat{\boldsymbol{\nabla}}_{m}^{\Ee}\right ). 
\end{equation}
It can be shown that the rank $m$ Nahm transform is a $\beta_m$-twisted bundle with connection by providing an analogous result to Proposition \ref{pr Hausel's description of Dirac-Higgs} obtained by adapting the work of Hausel \cite{hausel}.

\begin{proposition} \label{pr Nahm transform a la Hausel}
Consider the obvious projections $\pi_{X} : X \times \M^\st_m \to X$
and  $\pi_{\M} : X \times \M^\st_m \to \M^\st_m$. Then,
$\widehat{\E}_m$ is a $\beta_m$-twisted bundle over $\M^\st_m$
isomorphic to 
\begin{equation} \label{eq EE in terms of UU}
  \widehat{\E}_m \cong \mathbb{R}^1 \pi_{\M, *}
  \left(
    \pi_X^*E \otimes \U_m
    \stackrel{\pi_X^*\varphi\, \otimes \Id_{\U} + \Id_{\pi_X^*E}
      \otimes\,\mathbf{\Phi}_m
    }{\,\tikz[baseline=-.5ex]{\draw[->](0, 0) -- (3.5, 0);}\,}
    \pi_X^*E \otimes \U_m \otimes \pi_X^* K_X
  \right).
\end{equation}
\end{proposition}

\begin{proof}
This follows immediately from the proof of Proposition \ref{pr Hausel's description of Dirac-Higgs}.
\end{proof}

\begin{remark}
The Nahm transform defined by Frejlich and the second named author in \cite{frejlich&jardim} is precisely the case $m=1$ of the construction above. Note that, in this case, the gerbe $\beta_1$ is trivial, so $\widehat{\E}_1$ can be defined globally as a vector bundle over $\M_1$.
\end{remark}

\begin{remark}
Observe that, after Propositions \ref{pr Hausel's description of Dirac-Higgs} and \ref{pr Nahm transform a la Hausel}, the rank $m$ Nahm transform of the trivial Higgs bundle $\Oo := (\Oo_X, 0)$ coincides with $(\D_m, \boldsymbol{\nabla}_m)$, the rank $m$ $\beta_m$-twisted Dirac--Higgs bundle and connection,
\[
\hat{\Oo}_m \cong \left ( \D_m, \boldsymbol{\nabla}_m \right ).
\]
\end{remark}

Recall the morphism $\tau^\Ee_m$ defined in \eqref{eq definition of tau}. By definition of $\M^\Ee_m$, one has that every Higgs bundle in its image $\tau^\Ee_m(\M^\Ee_m) \subset \M_{nm}$ is without trivial factors although not necessarily stable. Over the open subset $(\tau^\Ee_m)^{-1}(\M^\st_{nm})$ of $\M^\Ee_m$ it is possible to give an alternative description of the rank $m$ Nahm transform, compare with \cite[Definition 3.0.2]{blaavand}.

\begin{proposition} \label{pr alternative definition of Nahm transform}
Consider the rank $m$ Nahm transform of a stable Higgs bundle $\Ee$ of rank $n$. One has that
\begin{equation} \label{eq definition of hat E}
\left . \widehat{\E}_m \right |_{(\tau^\Ee_m)^{-1}(\M^\st_{nm})}  \cong \tau^{\Ee,*}_m \D_{nm}
\end{equation}
and 
\begin{equation} \label{eq definition of hat nabla}
\left . \widehat{\boldsymbol{\nabla}}_{m}^{\Ee}\right |_{(\tau^\Ee_m)^{-1}(\M^\st_{nm})} \cong \tau^{\Ee,*}_m \boldsymbol{\nabla}_{nm}. 
\end{equation}
\end{proposition}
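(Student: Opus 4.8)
The plan is to prove \eqref{eq definition of hat E} first, by combining the higher–direct–image description \eqref{eq DD in terms of UU} of $\D_{nm}$ with cohomology and base change along $\tau^\Ee_m$, and then to deduce \eqref{eq definition of hat nabla} by comparing the two projection constructions after choosing the auxiliary metric on $\EE_{nm}$ compatibly. Fibrewise there is nothing to do: for $[\Ff]\in\M_m^\Ee$ the definition of $\tau^\Ee_m$ in \eqref{eq definition of tau} gives $\tau^\Ee_m([\Ff])=[\Ee\otimes\Ff]$, which lies in $\M_{nm}^\wtf$ by construction of $\M_m^\Ee$ (cf.\ Lemma~\ref{lm contained in Mwtf}), so the fibre of $\tau^{\Ee,*}_m\D_{nm}$ over $[\Ff]$ is $\ker\DDd^*_{\Ee\otimes\Ff}=\ker\DDd^{\Ee,*}_\Ff$, which is precisely the fibre of $\widehat{\E}_m$; moreover, if $\Ff$ is taken polystable then $\Ee\otimes\Ff$ is polystable (as $\Ee$ is stable), so the two sides are computed from the same representative. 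In particular both bundles have constant rank $2nm(g-1)$.

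To promote this to an isomorphism of virtual bundles I would use the moduli interpretation of the universal projective bundle together with \eqref{eq definition of tau} to get, over $X\times\M_m^\Ee$, that $(\Id_X\times\tau^\Ee_m)^*\PP\U_{nm}\cong\PP(\U_m\otimes\pi_X^*E)$ compatibly with the (projective) Higgs fields. Fixing a cover $\{Z_i\}$ fine enough for both $\beta_m$ and $\tau^{\Ee,*}_m\beta_{nm}$, the honest bundles $(\Id_X\times\tau^\Ee_m)^*U_{nm,i}$ and $U_{m,i}\otimes\pi_X^*E$ on $X\times Z_i$ are two lifts of the same projective bundle and hence differ by line bundles pulled back from $Z_i$; after twisting the $U_{m,i}$ by these — which changes neither $\widehat{\E}_m$ nor the gerbe up to isomorphism — one may assume $(\Id_X\times\tau^\Ee_m)^*(U_{nm,i},\Phi_{nm,i})\cong(U_{m,i}\otimes\pi_X^*E,\,\Phi_{m,i}\otimes\Id+\Id\otimes\pi_X^*\varphi)$, so that in particular $\tau^{\Ee,*}_m\beta_{nm}\cong\beta_m$. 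Because $\Ee\otimes\Ff$ is without trivial factors for every $\Ff\in\M_m^\Ee$, Lemma~\ref{sst ker D} gives $\HH^0(\Ee\otimes\Ff)=\HH^2(\Ee\otimes\Ff)=0$, the hypercohomology $\HH^1$ has constant rank, and $\RR^1\pi_{\M,*}$ of the relevant two-term complexes commutes with arbitrary base change. Combining this with \eqref{eq DD in terms of UU} and the analogous identity $\widehat{E}_{m,i}\cong\RR^1\pi_{\M,*}\left((U_{m,i}\otimes\pi_X^*E)\longrightarrow(U_{m,i}\otimes\pi_X^*E)\otimes\pi_X^*K_X\right)$, which is proved exactly as \eqref{eq DD in terms of UU}, identifies $\tau^{\Ee,*}_m D_{nm,i}$ with $\widehat{E}_{m,i}$ compatibly with the transition line bundles of $\beta_m$, which is \eqref{eq definition of hat E}.

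For the connection I would make the metric $\zeta$ on $\EE_{nm}$ explicit, taking it to be, under the $C^\infty$-isomorphism $\EE_{nm}\cong\EE_n\otimes\EE_m$, the tensor product of the fixed metric on $\EE_n$ underlying $\Ee$ and a fixed metric on $\EE_m$. Then $\tau^{\Ee,*}_m\underline{\Omega}_{nm}$ is canonically isometric to $\underline{\Omega}_m^\Ee$ with $\tau^{\Ee,*}_m\underline{d}=\underline{d}$, and, under the identification of the previous paragraph, the operators $\DDd_{\Ee\otimes\Ff}$, $\DDd^*_{\Ee\otimes\Ff}$ and the Green operator $\GGg_{\Ee\otimes\Ff}$ all pull back from the corresponding operators over $\M_{nm}^\wtf$; hence the kernel subbundle and its $L^2$-orthogonal complement are preserved, i.e.\ $\tau^{\Ee,*}_m j_{nm}=j^\Ee_m$ and $\tau^{\Ee,*}_m\pr_{nm}=\pr^\Ee_m$. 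Composing, $\tau^{\Ee,*}_m\boldsymbol{\nabla}_{nm}=\tau^{\Ee,*}_m\left(\pr_{nm}\circ\underline{d}\circ j_{nm}\right)=\pr^\Ee_m\circ\underline{d}\circ j^\Ee_m=\widehat{\boldsymbol{\nabla}}_m^{\Ee}$, which is \eqref{eq definition of hat nabla}.

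The step I expect to be the main obstacle is the gerbe bookkeeping in the second paragraph: one must check that the tautological fibrewise identification of the kernels of the Dirac operators is coherent with a single choice of trivialising cocycle for $\tau^{\Ee,*}_m\beta_{nm}$ and $\beta_m$, so that it descends to a genuine isomorphism of virtual bundles and not merely to a fibrewise-linear one; this is where the freedom of twisting the local $U_{m,i}$ by line bundles from the base is needed. Once the local universal bundles are normalised in this way and the metric on $\EE_{nm}$ is chosen compatibly with $\EE_{nm}\cong\EE_n\otimes\EE_m$, the analytic comparison of the two elliptic constructions is essentially formal, and the same discussion applies verbatim over the stable locus, where the gerbes are trivial (in particular for $m=1$, recovering the Nahm transform of Frejlich and Jardim).
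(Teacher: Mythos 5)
Your proof is correct and follows essentially the same route as the paper's: both arguments realize $\widehat{\E}_m$ and $\tau^{\Ee,*}_m\D_{nm}$ as subbundles of the common ambient bundle $\underline{\Omega}_m^\Ee\cong\tau^{\Ee,*}_m\underline{\Omega}_{nm}$, identify them fibrewise as $\ker\DDd^*_{\Ee\otimes\Ff}$, and deduce the connection identity from $\tau^{\Ee,*}_m j_{nm}=j^\Ee_m$ and $\tau^{\Ee,*}_m\pr_{nm}=\pr^\Ee_m$. The extra care you take with the gerbe identification $\tau^{\Ee,*}_m\beta_{nm}\cong\beta_m$ (normalizing the local universal bundles) and with choosing the metric on $\EE_{nm}$ compatibly with $\EE_n\otimes\EE_m$ supplies details the paper leaves implicit rather than changing the argument.
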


\begin{proof}
By the universal property of $(\U_{nm}, \boldsymbol{\Phi}_{nm})$, one has that the restriction to $X \times (\tau^\Ee_m)^{-1}(\M^\st_{nm})$ of the gerbe $\beta_{nm}$ is isomorphic to (the pull-back of) $\beta_m$, hence $\pi_X^*\Ee \otimes (\U_{m}, \boldsymbol{\Phi}_{m})$ is isomorphic to the restriction of $(\U_{nm}, \boldsymbol{\Phi}_{nm})$. The proof follows from this fact and Propositions \ref{pr Hausel's description of Dirac-Higgs} and \ref{pr Nahm transform a la Hausel}.
\end{proof}

We can then construct a new class of $\beta_m$-twisted $\BBB$-branes.

\begin{proposition-definition}\label{prop-def}
Given a semistable Higgs bundle $\Ee$ $(\Ee$ stable if $\rk(\Ee) \geq m)$, its rank $m$ Nahm transform $\widehat{\boldsymbol{\Ee}}_m$ is a $\beta_m$-twisted hyperholomorphic vector bundle over $\M_m^\Ee$ with a hyperholomorphic connection. In other words, $\widehat{\boldsymbol{\Ee}}_m$ is a space filling, $\beta_m$-twisted $\BBB$-brane over $\M_m^\Ee$ and we refer to it as the Nahm brane on $\M_m$ associated to $\Ee$.
\end{proposition-definition}

\begin{proof}
We have seen in Remark \ref{rm tau hyperholomorphic} that $\tau^\Ee_m$ is a hyperholomorphic morphism. Since the Dirac--Higgs connection is hyperholomorphic \cite[Theorem 2.6.3]{blaavand}, %and \cite[Proposition 11]{frejlich&jardim}),
thanks to Proposition \ref{pr alternative definition of Nahm transform}, one has that the higher rank Nahm transform is hyperholomorphic as well.
\end{proof}

\begin{remark} \label{rm other BBB}
One can also construct a $\beta_m$-twisted $\BBB$-brane on $\M_{nm}$ by considering the push-forward under $\tau^{\Ee}_{m}$ of the Dirac--Higgs bundle and connection $(\D_{m}, \boldsymbol{\nabla}_{m})$ on $\M_{m}^{\st}$.
\end{remark}

\begin{remark}
Note that the fibre of $\widehat{\E}_m$ over the point of the moduli space $\Ff \in \M_m^\st$ can be identified with the first hypercohomology space $\HH^1(\Ee \otimes \Ff)$, classifying the extensions of $\Ee^*$ by $\Ff$ (within the category of Higgs bundles). 
\end{remark}

%-------------------------------------------------------------------
%-------------------------------------------------------------------

\section{Mirror branes}

\subsection{A Fourier--Mukai transform for bundles twisted by gerbes}
\label{sc FM for twisted vector bundles}

The goal of this section is to describe the mirror partners of the Nahm $\BBB$-branes we have constructed in previous sections. We shall do that by Fourier--Mukai transforming the Dirac--Higgs bundle and the bundles we obtain from the high rank Nahm transform. Before that, we first need to adapt the Fourier--Mukai transform to the setting of vector bundles twisted with a gerbe, a task that we address in this subsection. This requires the introduction of some notation.

Let $Y \to V$ be a smooth $V$-variety equipped with a section $\hat{\sigma} : V \to Y$. In that case (see \cite[8.2, Proposition 4]{bosh}), there exists a relative Jacobian $\Jac^0_V(Y)$ and let us consider $Y^\vee$ to be a torsor for $\Jac^0_V(Y)$. Consider an \'etale covering $\{ V_i \to V \}_{i \in I}$ of $V$ and observe that $\{ \zeta_i: Y \times_V V_i \to Y \}_{i \in I}$ and $\{ \zeta_i^\vee : Y^\vee \times_V V_i \to Y^\vee \}_{i \in I}$ are \'etale coverings of, respectively, $Y$ and $Y^\vee$. Let $\beta$ be a flat unitary gerbe on the \'etale topology over $Y$ giving the set of flat unitary line bundles $\{ L_{ij} \to Y \times_V V_{ij} \}_{i,j \in I}$ over the intersections $(Y \times_V V_i) \times_V (Y \times_V V_j) \cong Y \times (V_i \times_V V_j) = Y \times_V V_{ij}$. These flat line bundles define naturally sections $\check{\sigma}_{ij} : V_{ij} \to \Jac^0_{V_{ij}}(Y)$ which  can be understood as $V_{ij}$-automorphisms of $Y^\vee$. By abuse of notation, we still denote them by 
\[
\check{\sigma}_{ij} \, : \, Y^\vee \times_V V_{ij} \longrightarrow Y^\vee \times_V V_{ij}.
\]

In this context, if $Z \to V$ is another $V$-variety, we say that $\fF : Z \to Y^\vee$ is a {\it $\beta$-shifted morphism of $V$-varieties} if it is a collection of morphisms $\{ f_i : Z \times_V V_i \to Y^\vee \times_V V_i \}_{i \in I}$ such that
\[
\left . (f_i \times_V \Id_{V_j}) \right |_{Z \times_V V_{ij}} = \left . \check{\sigma}_{ij} \circ (f_j \times_V \Id_{V_i}) \right |_{Z \times_V V_{ij}},
\]
for every $i,j \in I$. When all the $f_i$ satisfy a certain property of morphisms of varieties, we say that the $\beta$-shifted morphism $\fF$ has this property. For instance, if $Y^\vee$ is equipped with a symplectic form $\Lambda$, we say that the image of $\fF : Z \to Y^\vee$ is {\it Lagrangian}, if $f_i(Z \times_V V_i)$ is a Lagrangian subvariety of $Y^\vee \times_V V_i$ with respect to pull-back of the symplectic form $(\zeta_i^\vee)^*\Lambda$.

Also, we say that a {\it $\beta$-shifted coherent sheaf} $\Ggg$ is a collection of coherent sheaves $\{ G_i \in \Coh \left (Y^\vee \times_V V_i \right ) \}_{i \in I}$ such that for every $i,j \in I$ one has
\[
(\Id_{Y^\vee} \times_V \zeta^\vee_i)^* G_i = \check{\sigma}_{ij, *} (\Id_{Y^\vee} \times_V \zeta^\vee_j)^* G_j
\]
as sheaves over $Y^\vee \times_V V_{ij}$. If $\fF : Z \to Y^\vee$ is a $\beta$-shifted closed immersion of $V$-varieties, and $G$ is a coherent sheaf over $Z$, we define {\it $\beta$-shifted push-forward} $\fF_*G$ as the collection $\{ f_{i,*} z_i^*G \to Y^\vee \times_V V_i \}_{i \in I}$, where $z_i$ denotes the \'etale morphism $Z \times_V V_i \to Z$. Observe that $\fF_*G$ is naturally a $\beta$-shifted coherent sheaf as for every $i,j \in I$, one has
\begin{align*}
(\Id_{Y^\vee} \times_V \zeta_i)^* f_{i,*} z_i^*G & \cong (f_i \times_V 1_{V_j})_*  (\Id_{Z} \times_V \zeta_i)^* z_i^*G
\\
& \cong (f_i \times_V 1_{V_j})_* z_{ij}^*G
\\
& \cong \check{\sigma}_{ij,*} (f_j \times_V 1_{V_i})_* z_{ij}^*G
\\
& \cong \check{\sigma}_{ij,*} (f_j \times_V 1_{V_i})_* (\Id_{Z} \times_V \zeta_j)^* z_j^*G
\\
& \cong \check{\sigma}_{ij,*} (\Id_{Y^\vee} \times_V p_j)^* f_{j,*} z_j^*G
\end{align*}
thanks to the base-change theorems and the definition of $\beta$-shifted morphism.

Let us now adapt Kapustin--Witten's definition of a $\BAA$-brane to this context. Suppose $Y^\vee$ is equipped with hyperk\"ahler structure and denote by $\Lambda$ the holomorphic symplectic $2$-form associated to the first K\"ahler structure. We say that a $\beta$-shifted sheaf $\Ggg = \{ G_i \}_{i \in I}$ {\it admits a $\beta$-shifted $\BAA$-brane structure} if the support of each of the $G_i$ is a Lagrangian sub-variety of $Y^\vee \times_V V_i$ with respect to $(\zeta_i^\vee)^*\Lambda$.

We will now describe a certain $\beta$-shifted morphism that will be crucial for our purposes. Recall that we denoted by $B' \subset B$ the locus of smooth spectral curves, and by $\Ss'$, the restriction of $\Ss$ and $\M$ to $B'$. Also, we denote by $\M'$ the restriction of $\M$ to $B'$, and by 
\[
\left ( \U', \boldsymbol{\Phi}' \right ) := \left . \left ( \U, \boldsymbol{\Phi} \right ) \right |_{\M'},
\]
the restriction of the universal bundle to $\M' \subset \M^\st$. Thanks to the spectral correspondence outlined in Section \ref{sc Hitchin fibration}, the existence of $(\U', \boldsymbol{\Phi}') \to \M'$ implies that, locally in the \'etale topology, there exists a $\pi_\M^*\beta$-twisted universal line bundle \[
\P' \to \Ss\times_{B'} \Jac^{\, \delta}_{B'}(\Ss'),
\]
satisfying $\U' = (\Id_{\Jac} \times p)_* \P'$, where $p$ is the projection \eqref{eq def p}. We can provide a specific, fine enough covering for it. 

\begin{proposition} \label{pr our etale covering}
There exists an \'etale covering $\{ V_i \to B' \}_{i \in I}$ of $B'$ such that 
\begin{equation} \label{eq our etale covering}
\left \lbrace Z_i : = \M' \times_{B'} V_i \to \M' \right \rbrace_{i \in I}
\end{equation}
is an \'etale covering of $\M'$ which is fine enough for %$\left ( \U', \boldsymbol{\Phi}' \right )$ and 
$\P'$.
\end{proposition}

\begin{proof}
%It is enough to construct a covering fine enough for $\P'$ since $\U' = (\Id_{\Jac} \times p)_* \P'$ and $\boldsymbol{\Phi}'$ is given by the multiplication under the restriction to $\P'$ of the tautological section. 
%
Recall that two spectral curves intersect on a divisor of length $2n(g-1)$. Let us pick a smooth spectral curve $S_i$ and set $B'_i$ to be the open subset of $B'$ given by those curves $S_b$ that intersect $S_i$ in $2n(g-1)$ different points, {\it i.e.} those curves giving a reduced intersection divisor $S_i \cap S_b$. Chose a collection $I$ of spectral curves in such a way that the union of all $B'_i$ covers $B'$. One can easily see that for every smooth spectral curve $S_b$ there exist another one $S_i$ such that $S_b \cap S_i$ is a reduced divisor; this guarantees the existence of a covering with the desired properties.

%it is guaranteed the existence of such covering given by $I$.

Inside $\Tot(K_X) \times B'_i$, we consider the intersection
\[
V_i := (S_i \times B'_i) \cap \Ss |_{\Tot(K_X) \times B'_i}.
\]
Observe that, by construction, this provides an \'etale morphism $V_i \to B'$ with image $B'_i$. This gives the \'etale covering $\{V_i \to B' \}_{i \in I}$. Denote
\begin{equation} \label{eq Ss'_i}
\Ss'_i := \Ss' \times_{B'} V_i
\end{equation}
and observe that $\Ss'_i \to V_i$ has naturally a section since $V_i$ embeds into $\Ss|_{B'_i}$. It then follows that $\Jac_{V_i}^\delta(\Ss'_i)$ exists and it is equipped with a universal line bundle $P'_i \to \Ss'_i \times_{V_i} \Jac_{V_i}^\delta(\Ss'_i)$. 
\end{proof}

Recalling the Poincar\'e bundle $\Pp \to \Jac_{B'}^\delta(\Ss') \times_{B'} \Jac_{B'}^\delta(\Ss')^\vee$, we observe that the $\pi_\M^*\beta$-twisted universal line bundle $\P'$ defines naturally a $\pi_\M^*\beta$-shifted closed immersion
\[
\morph{\Ss'}{\Jac_{B'}^\delta(\Ss')^\vee}{s}{\P'|_{\{s\} \times \Jac_{B'}^\delta(\Ss'),}}{}{\iI}
\]
given by $\iI = \{ \imath_i : \Ss'_i \to \Jac^{\delta}_{V_i}(\Ss'_i)  \}$ is determined by $P'_i$.

For each open \'etale subset of Proposition \ref{pr our etale covering}, we can consider a diagram analogous to \eqref{sc duality of the Hitchin system on M'} 
\begin{equation} \label{sc duality of the Hitchin system on Z_i}
  \begin{gathered}
\xymatrix{
& \Jac_{V_i}^{\, \delta}(\Ss'_i) \times_{V_i} \Jac_{V_i}^{\, \delta}(\Ss'_i)^\vee %\ar[d]^{\cong} &
%\\
%& \Jac_{V_i}^{\, \delta}(\Ss'_i) \times_{B'} \Jac_{V_i}^{\, \delta}(\Ss'_i)^\vee 
 \ar[rd]^{\check{\pi}} \ar[ld]_{\hat{\pi}} &
\\
Z_i \cong \Jac_{V_i}^{\, \delta}(\Ss'_i)  & & Z_i \cong \Jac_{V_i}^{\, \delta}(\Ss'_i)^\vee 
}    
  \end{gathered}
\end{equation}
and a Poincar\'e bundle
\[
\Pp_i \to \Jac_{V_i}^{\, \delta}(\Ss'_i) \times_{V_i} \Jac_{V_i}^{\, \delta}(\Ss'_i)^\vee.
\]

Proceeding as in \eqref{eq def FM} and \eqref{eq def FM*}, we define the Fourier--Mukai transforms $R\hat{\FFf}_i$ and $R\check{\FFf}_i$ replacing $\Pp$ by $\Pp_i$. Given a complex of $\beta$-twisted coherent sheaves $\F^\bullet$, define its $\beta$-twisted Fourier--Mukai transform $R\hat{\FFf}^{\beta}(\F^\bullet)$ as $\{ R\hat{\FFf}_i(F^\bullet_i) \to Z_i \}_{i \in I}$ and consider an analogous definition in the case of $R\check{\FFf}^{\beta}$. We say that a $\beta$-twisted coherent sheaf $\F = \{ F_i \to Z_i \}_{i \in I}$ is $\ell$-WIT if each of the $F_i$ is $\ell$-WIT with respect to $R\hat{\FFf}_i$. Similarly, a $\beta$-shifted coherent sheaf $\Ggg = \{ G_i \to Z_i \}_{i \in I}$ is $\ell$-WIT if each of the $G_i$ is $\ell$-WIT with respect to $R\check{\FFf}_i$. We denote by $\WWIT_\ell^{\beta}(\Jac_{B'}^{\, \delta}(\Ss'))$ the category of $\beta$-twisted $\ell$-WIT sheaves on $\Jac_{B'}^{\, \delta}(\Ss')$ and by $\wWIT_\ell^{\beta}(\Jac_{B'}^{\, \delta}(\Ss')^\vee)$ the category of $\beta$-shifted $\ell$-WIT sheaves on $\Jac_{B'}^{\, \delta}(\Ss')^\vee$.

In the following theorem we see that the $\beta$-twisted Fourier--Mukai transform relates both categories.

\begin{theorem} \label{tm FM with a gerbe}
The $\beta$-twisted Fourier--Mukai transform $R\hat{\FFf}^{\beta}$ induces an equivalence of categories
\[
R\hat{\FFf}^{\beta} : \WWIT_\ell^{\beta}(\Jac_{B'}^{\, \delta}(\Ss')) \stackrel{\cong}{\longrightarrow} \wWIT_{d-\ell}^{\beta}(\Jac_{B'}^{\, \delta}(\Ss')^\vee)
\]
with inverse $R\check{\FFf}^{\beta}$.
\end{theorem}

\begin{proof}
We first prove that the Fourier--Mukai transform of a $\ell$-WIT $\beta$-twisted coherent sheaf is a $(d-\ell)$-WIT $\beta$-shifted coherent sheaf. We observe that the following diagram 
\[
\xymatrix{
Z_{ij} \times_{V_{ij}} Z_{ij} \ar[d]_{\check{\pi}_{ij}} \ar[rrr]^{\Id_{Z_i} \times_{V_i} (\Id_{\Jac^\vee} \times_{B'} \zeta_i)} & & & Z_i \times_{V_i} Z_i \ar[d]^{\check{\pi}}
\\
Z_{ij} \ar[rrr]^{\Id_{\Jac^\vee} \times_{B'} \zeta_i} & & & Z_i
}
\]
commutes, being $\check{\pi}_{ij}$ the projection to the second factor. The diagram is also Cartesian as we naturally have $Z_{i} \times_{V_i} Z_{ij} \cong Z_{ij} \times_{V_{ij}} Z_{ij}$. Using the commutativity of the previous diagram and base change theorems, one can show that
\begin{align*}
(\Id_{\Jac^\vee} \times_{B'} \zeta_i)^* R\hat{\FFf}_i(F_i) & \cong (\Id_{\Jac^\vee} \times_{B'} \zeta_i)^* R\check{\pi}_* \left( \Pp_i \otimes \hat{\pi}^* F_i \right )
\\
& \cong R\check{\pi}_{ij,*}(\Id_{Z_i} \times_{V_i} (\Id_{\Jac^\vee} \times_{B'} \zeta_i))^*\left( \Pp_i \otimes \hat{\pi}^* F_i \right )
\\
& \cong R\check{\pi}_{ij,*} \left( \Pp_{ij} \otimes \hat{\pi}_{ij}^*(\Id_{\Jac^\vee} \times_{B'} \zeta_i)^* F_i \right )
\\
& \cong R\hat{\FFf}_{ij} \left( (\Id_{\Jac^\vee} \times_{B'} \zeta_i)^* F_i \right ),
\end{align*}
where $\Pp_{ij}$ is the Poincar\'e bundle over $Z_{ij} \times_{V_{ij}} Z_{ij} \cong \Jac_{V_{ij}}^\delta(\Ss_{ij}) \times_{V_{ij}} \Jac_{V_{ij}}^\delta(\Ss_{ij})$ and $R\hat{\FFf}_{ij}$ the corresponding Fourier--Mukai transform.

Let us recall that $\beta$-twisted sheaves must satisfy that
\[
(\Id_{\Jac^\vee} \times_{B'} \zeta_i)^* F_i \cong L_{ij} \otimes (\Id_{\Jac^\vee} \times_{B'} \zeta_j)^* F_j.
\]
Now we recall that one of the classical properties \cite{mukai} of the Fourier--Mukai transform states that
\[
R\check{\FFf}_{ij} \left( L_{ij} \otimes (\Id_{\Jac^\vee} \times_{B'} \zeta_j)^* F_j \right ) \cong \check{\sigma}_{ij,*} R\check{\FFf}_{ij} \left( (\Id_{\Jac^\vee} \times_{B'} \zeta_j)^* F_j \right).
\]
Then, 
\begin{align*}
(\Id_{\Jac^\vee} \times_{B'} \zeta_i)^* R\hat{\FFf}_i(F_i) & \cong R\check{\FFf}_{ij} \left( (\Id_{\Jac^\vee} \times_{B'} \zeta_i)^* F_i \right )
\\
& \cong \check{\sigma}_{ij,*} R\check{\FFf}_{ij} \left( (\Id_{\Jac^\vee} \times_{B'} \zeta_j)^* F_j \right)
\\
& \cong \check{\sigma}_{ij,*} (\Id_{\Jac^\vee} \times_{B'} \zeta_j)^* R\hat{\FFf}_j(F_j),
\end{align*}
so we see that the $R\hat{\FFf}^{\beta}(\F)$ is indeed a $\beta$-twisted sheaf.

Finally, since all the $R\hat{\FFf}_i$ are equivalence of categories of WIT sheaves with inverse $R\check{\FFf}_i$, the same holds between $\beta$-twisted and $\beta$-shifted WIT sheaves. 
\end{proof}

\subsection{The Dirac--Higgs bundle under Fourier--Mukai}

\label{sc FM Dirac-Higgs}

Recall that $\M' \subset \M^\st$ denotes the locus of the moduli space given by those Higgs bundles whose spectral curves are smooth. Let us consider the restriction to $\M'$ of the $\beta$-twisted Dirac--Higgs bundle
\[
\D' := \D|_{\M'}.
\]
In the remaining of the section we provide a description of $\D'$ in terms of a Fourier--Mukai transform. We shall consider the intersection of our family of smooth spectral curves $\Ss'$ with the zero section of the bundle $K_X\to X$, which is identified with $X \times \{ 0\}$,
\[
\Xi^0 := \Ss' \cap \left ( \left (X \times \{ 0 \} \right ) \times B' \right ).
\]
Note that for each $b = (b_1, \dots, b_n) \in B'$ one has that $S_b \cap X \times \{ 0 \}$ is the locus where $b_n = 0$, being $b_n \in H^0(K_X^{\otimes n})$. Recall that $\deg{K_X^{\otimes n}} = 2n(g-1)$. Then $\Xi^0$ is a finite cover over $B'$ of degree $2n(g-1)$,
\begin{equation} \label{eq finite cover}
  \begin{gathered}
\xymatrix{
\Xi^0 \ar[rr]^{2n(g-1) \, : \, 1} & & B'.
}    
  \end{gathered}
\end{equation}

Take an \'etale open subset $V_i \to B'$ and the family of curves $\Ss'_i \to V_i$ as defined in \eqref{eq Ss'_i} and consider the obvious projections occurring in the following commutative diagram, 
\begin{equation} \label{big diagram}
  \begin{gathered}
\xymatrix{
& \Jac_{V_i}^{\, \delta}(\Ss'_i) \times_{V_i} \Ss'_i \ar[rd]^{\widetilde{\pi}_S} \ar[ld]_{\widetilde{\pi}_{\Jac}} \ar[dd]^{(\Id_{\Jac} \times p)} &
\\
\Jac_{V_i}^{\, \delta}(\Ss'_i) \ar[dd]_{\Id_{\Jac}} & & \Ss'_i \ar[dd]^{p_i}
\\
& \Jac_{V_i}^{\, \delta}(\Ss'_i) \times X \ar[rd]^{\pi_{V \times X}} \ar[ld]_{\pi_{\Jac}} \ar[dd]^{\pi_X} &
\\
\Jac_{V_i}^{\, \delta}(\Ss'_i) & & V_i \times X \ar[ld]^{r_i}\\
& X &
}
  \end{gathered}
\end{equation}
Let us denote by $\widetilde{p}_i : \Ss'_i \to X$ the composition of $p_i$ with the obvious projection $r_i$ onto the second factor. Observe that the bundle $\widetilde{p}_i^*K_X \to \Ss'_i$ has a tautological section that we denote by $\lambda_i$. We observe that $\lambda_i$ vanishes at 
\[
\Xi^0_i := \Xi^0 \times_{B'} V_i. 
\]

Using \eqref{eq DD in terms of UU}, we give a description of $\D'$ which generalizes the fibrewise description given in \cite[Section 7]{hitchin_char}. Recall the $\beta$-shifted closed embedding $\iI : \Ss' \hookrightarrow \M'$ defined in Section \ref{sc FM for twisted vector bundles}.

\begin{proposition} \label{pr DD in terms of PP}
%Let us denote by $\widetilde{p} : \Ss \to X$ the composition of $p : \Ss \to B \times X$ with the obvious projection $r : B \times X \to X$. 

Consider the $\beta$-shifted push-forward under $\iI$,
\[
\check{\Ddd}' := \iI_* \left (\widetilde{p}^* K_X \otimes \Oo_{\Xi^0} \right).
\]
Then, $\check{\Ddd}' $ is a $\beta$-shifted $0$-WIT sheaf on $\Jac_{B'}^{\delta}(\Ss)^\vee$ and
\[
\D' \cong R \check{\FFf}^{\beta} \left ( \check{\Ddd}'  \right).
\]
\end{proposition}

\begin{proof}
We work locally over the \'etale open subset $Z_i = V_i \times_{B'} \M'$. Starting from \eqref{eq DD in terms of UU} and the isomorphism \eqref{eq M = Jac_B of S}, note that
$$ D'_i  \cong \RR^1 \pi_{\M, i, *} \left ( U'_i \stackrel{\Phi}{\longrightarrow} U'_i \otimes \pi_X^* K_X \right )
\cong \RR^1 \pi_{\Jac, *} \left (U'_i \stackrel{\Phi}{\longrightarrow} U'_i \otimes \pi_X^* K_X \right ). $$
Next, recalling the relation between the universal bundle from Section \ref{sc gerbes} and the Poincar\'e bundle described in Section \ref{sc FM for twisted vector bundles}, and making use of the projection formula and base change theorems for the various morphisms in the diagram \eqref{big diagram}, we obtain 
\begin{align*}
D'_i & \cong \RR^1\pi_{\Jac, *} \left ( (\Id_{\Jac} \times p_i)_* P'_i \stackrel{\Phi}{\longrightarrow} (\Id_{\Jac} \times p_i)_* P'_i \otimes \pi_X^* K_X \right )
\\
& \cong \RR^1 \pi_{\Jac, *} \left( (\Id_{\Jac} \times p_i)_* \left ( P'_i \stackrel{(\Id_{\Jac} \times \lambda_i)}{\longrightarrow} P'_i \otimes (\Id_{\Jac} \times p_i)^* \pi_X^* K_X \right ) \right)
\\
& \cong \RR^1 \widetilde{\pi}_{\Jac, *} \left ( P'_i \stackrel{(\Id_{\Jac} \times \lambda_i)}{\longrightarrow} P'_i \otimes \widetilde{\pi}_S^* \widetilde{p}^*_i K_X \right )
\\
& \cong \RR^1 \widetilde{\pi}_{\Jac, *} \left ( P'_i \otimes  \widetilde{\pi}_S^*( \Oo_{\Ss'_i} \stackrel{\lambda_i}{\longrightarrow} \widetilde{p}_i^* K_X ) \right)
\\
& \cong R^0\widetilde{\pi}_{\Jac, *}\left (\coker (\Id_{P_i'} \otimes \widetilde{\pi}^*_S \lambda_i) \right )
\\
& \cong  R^0 \widetilde{\pi}_{\Jac, *} \left ( P'_i \otimes  \widetilde{\pi}_S^* (\widetilde{p}^*_i K_X \otimes \Oo_{\Xi^0_i}) \right ), 
\end{align*}
where for the previous to last equality, we have used the (vertical) spectral sequence 
\begin{align*}
0 \longrightarrow R^1\widetilde{\pi}_{\Jac, *}\left (\ker (\Id_{P_i'} \otimes \widetilde{\pi}^*_S \lambda_i) \right ) \longrightarrow  \RR^1 \widetilde{\pi}_{\Jac, *} & \left ( P'_i \otimes  \widetilde{\pi}_S^*( \Oo_{\Ss'_i} \stackrel{\lambda_i}{\longrightarrow} \widetilde{p}_i^* K_X ) \right)  
\\
& \longrightarrow R^0\widetilde{\pi}_{\Jac, *}\left (\coker (\Id_{P_i'} \otimes \widetilde{\pi}^*_S \lambda_i) \right ) \longrightarrow 0,  
\end{align*}
and the fact that $\lambda_i$ is an embedding.

Recall that, by the definition of $\iI = \{ \imath_i : \Ss'_i \hookrightarrow \Jac_{V_i}^{\delta}(\Ss'_i)^\vee \}$, one has that the restriction of the Poincar\'e bundle $\Pp^*_i \to \Jac_{V_i}^{\delta}(\Ss'_i) \times_{V_i} \Jac_{V_i}^{\delta}(\Ss'_i)^\vee$ to the image of the embedding $\Id_{\Jac} \times \imath_i$, coincides with $P'_i$. Using this, the projection formula, and base change theorems on the diagram \eqref{big diagram}, we have that 
\begin{align*}
D'_i \cong & R^0 \widetilde{\pi}_{\Jac, *} \left ( P'_i \otimes  \widetilde{\pi}_S^* (\widetilde{p}_i^* K_X \otimes \Oo_{\Xi^0_i}) \right ) 
\\
\cong & R^0 \widetilde{\pi}_{\Jac, *} \left ((\Id_{\Jac} \times \imath_i)^* \Pp^*_i \otimes  \widetilde{\pi}_{S}^* (\widetilde{p}_i^* K_X \otimes \Oo_{\Xi^0_i}) \right ) 
\\
\cong & R^0 \hat{\pi}_*  R^0 \left ( (\Id_{\Jac} \times \imath_i)_* (\Id_{\Jac} \times \imath_i)^* \Pp^*_i \otimes \widetilde{\pi}_S^* (\widetilde{p}_i^* K_X \otimes \Oo_{\Xi^0_i}) \right )
\\
\cong & R^0 \hat{\pi}_* \left (\Pp^*_i \otimes R^0 (\Id_{\Jac} \times \imath_i)_* \widetilde{\pi}_S^* \left (\widetilde{p}_i^* K_X \otimes \Oo_{\Xi^0_i} \right ) \right )
\\
\cong & R^0 \hat{\pi}_* \left (\Pp^*_i \otimes  \check{\pi}^* R^0 \imath_{i,*} \left (\widetilde{p}_i^* K_X \otimes \Oo_{\Xi^0_i} \right ) \right )
\\
\cong & R^0 \check{\FFf}_i \left (R^0\imath_{i,*} \left (\widetilde{p}_i^* K_X \otimes \Oo_{\Xi^0_i} \right ) \right ).
\end{align*}
Since $\imath_i$ is a closed embedding, one has that
$R^0\imath_{i,*}(\widetilde{p}_i^* K_X \otimes \Oo_{\Xi^0_i})$
coincides with $\imath_{i,*}(\widetilde{p}_i^* K_X \otimes
\Oo_{\Xi^0_i})$. The support of $(\widetilde{p}_i^* K_X \otimes
\Oo_{\Xi^0_i})$ is $\Xi^0_i$, which is a finite $2n(g - 1)$-cover of
$V_i$, and so is $\imath_i(\Xi^0_i)$ which is the support of
$\imath_{i,*}(\widetilde{p}_i^* K_X \otimes
\Oo_{\Xi^0_i})$. Therefore, $R \check{\FFf}_i
(\imath_{i,*}(\widetilde{p}_i^* K_X \otimes \Oo_{\Xi^0_i}))$ is a
complex supported in degree $0$. 
\end{proof}

After Proposition \ref{pr DD in terms of PP} and equation \eqref{eq
  inverse of FM}, it is possible to study the Fourier--Mukai transform
of $\D'$. 

\begin{corollary} \label{co description of check D}
The Dirac--Higgs bundle $\D'$ is a $d$-WIT $\beta$-twisted sheaf,
where $d = 1 + n^2(g - 1)$, and   
\[
R \hat{\FFf}^{\beta} \left ( \D' \right ) \cong \check{\Ddd}'.
\]
with
\[
\supp \left ( \check{\Ddd}' \right ) = \iI \left (\Xi^0 \right ).
\]
\end{corollary}

\subsection{Nahm branes under Fourier--Mukai}

\label{sc Fourier-Mukai on Nahm transform}

We finished Section \ref{sc high rank Nahm} showing that the Nahm transform is a $\beta_m$-twisted $\BBB$-brane on the moduli space of stable Higgs bundles $\M_m^\st$ (or on a dense open subset in the case $n = m$) which called {\it Nahm brane associated to $\Ee$}. Using the formalism that we presented in Section \ref{sc mirror}, it is then natural to ask how the Nahm branes transform under mirror symmetry, which we address in this section.

Assume for simplicity that $n \neq m$ and that $\Ee$ is a stable Higgs bundle of rank $n$ and $0$ degree with reduced spectral curve $S_\Ee$. As $n \neq m$, the rank $m$ Nahm transform $\widehat{\boldsymbol{\Ee}}_m$ of $\Ee$ is a $\beta_m$-twisted $\BBB$-brane over $\M^\Ee_m = \M^\st_m$. Also, recall that the smooth locus of the Hitchin fibration is contained in the stable locus, $\M'_m \subset \M_m^\st$ and denote by 
\[
\widehat{\boldsymbol{\Ee}}'_m = \widehat{\boldsymbol{\Ee}}_m|_{\M'_m}
\]
the restriction of the rank $m$ Nahm transform. In this section we perform the Fourier--Mukai transform of $\widehat{\boldsymbol{\Ee}}'_m$. 

Recall the family $\Ss'_m \to B'_m$ of smooth spectral curves of rank $m$. Having in mind that $S_\Ee$ is reduced by hypothesis, consider as well the constant family of reduced curves
\[
\Ss_\Ee := S_\Ee \times B'_m.
\]
Define also the family of curves inside $\Tot(K_X)$ parametrized by $B'_m$,
\[
\Sigma_m^\Ee := \sigma\left ( \Ss_\Ee \times_{X \times B'_m} \Ss'_m \right ),
\]
where $\sigma$ denotes here the fibrewise sum in $r_m^* K_X \to X \times B'_m$, where $r_m : B_m' \times X \to X$ is the projection onto the second factor. After Proposition \ref{pr description of Sigma_Phi}, one has the following commutative diagram for families given by the obvious projections
\begin{equation} \label{eq commutative diagram families}
  \begin{gathered}
    \xymatrix{
 & \Sigma_m^\Ee \ar[ld]_{\widetilde{q}_\Ee}^{m:1} \ar[rd]^{q_m}_{n:1} \ar[dd]^{p_m^\Ee}_{nm:1} &
\\ 
\Ss_\Ee \ar[rd]_{\widetilde{p}_\Ee}^{n:1} &  & \Ss'_m \ar[ld]^{p_m}_{m:1}
\\
 & X \times B'_m &
}
  \end{gathered}
\end{equation}
where $p_m^\Ee$ is a $nm$-cover, $\widetilde{p}_\Ee = (p_\Ee \times \Id_{B'_m})$ and $q_m$ are $n$-covers, and $p_m$ and $\widetilde{q}_\Ee$ are $m$-covers. 

Noting that $S_\Ee\subset \Tot(K_X)$, consider $(-1)$ to be the (additive) inversion along the fibres of $K_X$ and consider the family of curves $-\Ss_\Ee$. Define one more family of curves over $B'_m$,
\[
\Xi_m^\Ee := \Ss'_m \cap -\Ss_\Ee.
\]
Note that $\Xi_m^\Ee$ equals $\Sigma_m^\Ee \cap (X \times \{ 0 \})
\times B'_m$. Since $\Sigma_m^\Ee$ is a family of spectral curves of
the form $S_{\Ee \otimes \Ff}$, by \eqref{eq finite cover} we have
that $\Xi_m^\Ee$ is a $2nm(g-1)$-cover of $B'_m$, 
\[
  \xymatrix{
    \Xi_m^\Ee \ar[rr]^{2nm(g-1) \, : \, 1} & & B'_m.
  }
\]

Given an \'etale open subset $V_{m,i} \to B'_m$, we consider $\Ss'_{m,i} := \Ss_m \times_{B'} V_i$, $\Ss'_{\Ee,i} := \Ss_{\Ee} \times_{B'} V_i$, $\Sigma_{m,i}^\Ee := \Sigma_m^\Ee \times_{B'} V_i$ and $\Xi_{m,i}^\Ee := \Xi_m^\Ee \times_{B'} V_i$. We consider as well the corresponding lifts of the morphisms appearing in \eqref{eq commutative diagram families}. Over $\Ss_{\Ee,i} \to V_{m,i}$, consider the constant family of rank $1$ torsion free sheaves $\Ll_{\Ee,i}$ determined by $L_\Ee$ and observe that $(-1)^*\Ll_{\Ee,i}$ is supported on $-\Ss_{\Ee,i}$, hence one can restrict this sheaf to $\Xi^\Ee_{m,i}$. 

The morphism $\widetilde{p}_{m,i} : \Ss'_{m,i} \to X$ obtained as the composition of $p_{m,i}$ with the obvious projection $r_{m,i}$ onto the second factor. The bundle $\widetilde{p}_{m,i}^*K_X \to \Ss'_{m,i}$ has a tautological section $\lambda_{m,i}$ which vanishes at $\Xi^\Ee_{m,i}$.

We can now provide a result analogous to Proposition \ref{pr DD in terms of PP} for $\widehat{\E}'_m:= \widehat{\E}_m|_{\M'_m}$. We recall the $\beta_m$-shifted closed embedding $\iI_m : \Ss'_m \hookrightarrow \M'_m$ defined in Section \ref{sc FM for twisted vector bundles}.

\begin{theorem} \label{tm FM of hat E}
Let $\Ee = (E, \varphi)$ be a stable Higgs bundle with spectral data $L_\Ee \to S_\Ee$ such that $S_\Ee$ is reduced. Consider the $\beta_m$-shifted push-forward under $\iI_m$,
\[
\widecheck{\Eee}'_m := \iI_{m,*} \left (\widetilde{p}_m^*K_X \otimes (-1)^*\Ll_\Ee|_{\Xi^\Ee_m} \right ).
\]
Then, $\widecheck{\Eee}'_m$ is a $\beta_m$-shifted $0$-WIT sheaf on $\Jac_{B'}^{\delta}(\Ss)^\vee$ and
\[
\widehat{\E}'_m \cong R \check{\FFf}^{\beta_m}_m \left ( \widecheck{\Eee}'_m \right ).
\]\end{theorem}

\begin{proof}
Recalling Proposition \ref{pr Nahm transform a la Hausel}, we proceed as in the proof of Proposition \ref{pr DD in terms of PP} with the appropriate modifications. We have 
\begin{align*}
\widehat{E}'_{m,i} & \cong  \RR^1 \pi_{\Jac, *}  \left (U'_{m,i} \otimes \pi_{X}^* E \stackrel{\Phi_{m,i} \otimes \Id + \Id \otimes \pi_{X}^*\varphi}{\longrightarrow} U'_{m,i} \otimes  \pi_{X}^* E  \otimes \pi_X^* K_X \right )
\\
& \cong \RR^1\pi_{\Jac, *} \left ((\Id_{\Jac} \times p_{m,i})_* P'_{m,i} \otimes \pi_{X}^* E \stackrel{\Phi_{m,i} \otimes \Id + \Id \otimes \pi_{X}^*\varphi}{\longrightarrow} (\Id_{\Jac} \times p_{m,i})_* P'_{m,i} \otimes \pi_X^*(E \otimes  K_X) \right )
\\
& \cong \RR^1 \pi_{\Jac, *} \left((\Id_{\Jac} \times p_{m,i})_* \left ( P'_{m,i} \otimes \widetilde{\pi}^*_S \widetilde{p}_{m,i}^* E \stackrel{(\Id_{\Jac} \times \lambda_{m,i}) \otimes \Id + \Id \otimes \widetilde{\pi}^*_S \widetilde{p}_{m,i}^*\varphi}{\longrightarrow} P'_{m,i} \otimes \widetilde{\pi}^*_S \widetilde{p}_{m,i}^* (E \otimes K_X) \right ) \right)
\\
& \cong \RR^1 \widetilde{\pi}_{\Jac, *} \left (P'_{m,i} \otimes \widetilde{\pi}_S^* \left ( \widetilde{p}_{m,i}^* E \stackrel{\Id \otimes \lambda_{m,i} + \widetilde{p}_{m,i}^*\varphi \otimes \Id}{\longrightarrow} \widetilde{p}_{m,i}^*E \otimes \widetilde{p}_{m,i}^*K_X \right ) \right ).
\end{align*}
Using a (vertical) spectral sequence similar to the one that appears in Proposition \ref{pr DD in terms of PP}, one can show
\[
\widehat{E}'_{m,i} \cong R^0 \widetilde{\pi}_{\Jac, *} \left (P'_{m,i} \otimes \widetilde{\pi}_S^* \left (\widetilde{p}_{m,i}^*K_X \otimes (-1)^*\Ll_{\Ee,i}|_{\Xi_{m,i}^\Ee} \right ) \right ),
\]
since $\Id \otimes \lambda_{m,i} + \widetilde{p}_{m,i}^*\varphi \otimes \Id$ is again injective.

Next, using the description of the Poincar\'e bundle $\P_m$ in terms of $\Pp_m$ given in Section \ref{sc gerbes}, the projection formula and base change theorems for the various morphisms of diagram \eqref{big diagram}, one obtains 
\[
\widehat{\E}'_m \cong R^0 \check{\FFf}_m^{\beta_m} \left (R^0\imath_{m,i,*}\left (\widetilde{p}_m^*K_X \otimes (-1)^*\Ll_\Ee|_{\Xi_m^\Ee} \right ) \right ).
\]
Finally, as in the last part of the proof of Proposition \ref{pr DD in terms of PP}, due to the fact that $\Xi_{m,i}^\Ee$ is a finite $2nm(g - 1)$-cover of $B'_m$, $R \check{\FFf}_m^{\beta_m} \left (\imath_{m,i,*}\left (\widetilde{p}_{m,i}^*K_X \otimes (-1)^*\Ll_\Ee|_{\Xi_{m,i}^\Ee} \right ) \right )$ is a complex supported in degree $0$.
\end{proof}

Thanks to Theorem \ref{tm FM of hat E} and \eqref{eq inverse of FM} we can describe the Fourier--Mukai transform restricted to the smooth locus of the Hitchin fibration $\M'_m \subset \M_m$.

\begin{corollary} \label{co description of check Ee}
Let $d_m = 1 + m^2(g - 1)$. The $\beta_m$-twisted bundle $\widehat{\E}'_m$ is a $\beta_m$-twisted $d_m$-WIT sheaf satisfying 
\[
R \hat{\FFf}_m^{\beta_m} \left ( \widehat{\E}'_m \right ) \cong \widecheck{\Eee}'_m,
\]
where
\[
\supp \left ( \widecheck{\Eee}'_m \right ) = \iI_m \left (- \Xi^\Ee_m \right ).
\]
\end{corollary}

We study now its support.

\begin{theorem} \label{tm Xi^Ee Lagrangian}
The support of $\widecheck{\Eee}'_m$ is a $\beta_m$-shifted $2nm(g-1)$-section of the Hitchin fibration and it is Lagrangian with respect to $\Lambda_m^1$.
\end{theorem}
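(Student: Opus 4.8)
The plan is to establish the two assertions separately: first that $\hat{\imath}_m(-\Xi_m^\Ee)$ is a $2nm(g-1)$-section of $\hat h : \Jac_{B'_m}^{\delta_m}(\Ss'_m) \to B'_m$, and then that it is Lagrangian for $\Lambda^1_m$. The first is essentially bookkeeping: by construction $\Xi_m^\Ee = \Ss_m \cap (-\Ss_\Ee)$ inside $\Tot(K_X) \times B'_m$, and by \eqref{eq finite cover} (applied to the family $\Sigma_m^\Ee$ of spectral curves of the shape $S_{\Ee\otimes\Ff}$, whose intersection with the zero section is exactly $\Xi_m^\Ee$) it is a finite flat $2nm(g-1)$-cover of $B'_m$. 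Composing the closed embedding $\check\imath_m : \Ss'_m \hookrightarrow \Jac^{\delta_m}_{B'_m}(\Ss'_m)^\vee$ (and then the canonical isomorphism $\Jac^{\delta_m}_{B'_m}(\Ss'_m)^\vee \cong \Jac^{\delta_m}_{B'_m}(\Ss'_m)$, which by definition of $\hat\imath_m$ turns $\check\imath_m$ into $\hat\imath_m$) with the covering map shows that $\hat\imath_m(-\Xi_m^\Ee)$ maps finitely and flatly onto $B'_m$ with degree $2nm(g-1)$, i.e. is a multisection of the stated multiplicity; here I use that $\check\imath_m$ is fibrewise the Abel--Jacobi embedding $S_{m,b}\hookrightarrow \Jac(S_{m,b})$, so it is compatible with the fibrations over $B'_m$.

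For the Lagrangian property, the idea is to pull everything back along the tensorization map and exploit the already-established fact (Remark \ref{rm tau hyperholomorphic} and Lemma \ref{lm tau and Lambda}) that $\tau^\Ee_m{}^* \Lambda^1_{nm} = n\,\Lambda^1_m$. Concretely, Proposition \ref{pr description of Sigma_Phi} (in its relative form \eqref{eq commutative diagram families}) identifies the generic point of $\Sigma_m^\Ee$, hence of $-\Xi_m^\Ee$, with a point of the form $S_{\Ee\otimes\Ff}$ meeting the zero section; on the level of Jacobians this says precisely that $\hat\imath_m(-\Xi_m^\Ee)$ is the locus swept out, as $\Ff$ ranges over $\M'_m$, by those line bundles $L$ on $S_{m,b}$ for which $L_{\Ee\otimes\Ff}$ "contains'' a direction tangent to $X\times\{0\}$. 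The cleanest way to see Lagrangianity is therefore: $-\Xi_m^\Ee$ (equivalently $\Xi_m^\Ee$) is the preimage under $\tau^\Ee_m$ of the subvariety $\hat\imath_{nm}(\Xi^0_{nm})$ appearing in Corollary \ref{co description of check D}, which is the support of $R^{d_{nm}}\hat\FFf_{nm}(D'_{nm})$. Since the Dirac--Higgs connection $\boldsymbol\nabla_{nm}$ is hyperholomorphic (Proposition \ref{pr Dirac connection is Poincare basic}) and adapted to the Hitchin fibration (Proposition \ref{prop adapted}), the Fourier--Mukai--Nahm formalism of Section \ref{sc mirror} — in particular Theorem \ref{tm FMN} and the curvature formula of Lemma \ref{lm description of Nahm transform} together with the Corollary to the Arinkin--Polishchuk theorem — forces its transform to be a $\BAA$-brane, so $\check\imath_{nm}(\Xi^0_{nm})$ is complex Lagrangian for $\Lambda^1_{nm}$. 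Pulling back by the hyperholomorphic map $\tau^\Ee_m$ and using Lemma \ref{lm tau and Lambda}, the restriction of $\Lambda^1_m$ (up to the nonzero factor $n$) to $\hat\imath_m(-\Xi_m^\Ee)$ vanishes, and a dimension count ($\dim \hat\imath_m(-\Xi_m^\Ee) = \dim B'_m = \tfrac12\dim\M'_m$, since it is a multisection) upgrades "isotropic'' to "Lagrangian.''

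I expect the main obstacle to be the compatibility bookkeeping in the previous paragraph: one must check carefully that $\tau^\Ee_m$ actually carries $\hat\imath_m(-\Xi_m^\Ee)$ into $\hat\imath_{nm}(\Xi^0_{nm})$ — equivalently, that under $\hat\tau^\Ee_m$ (Corollary \ref{co description of hat tau}) a spectral datum $L_\Ff\to S_\Ff$ with $S_{\Ee\otimes\Ff}$ meeting the zero section maps to a sheaf supported on $\Xi^0_{nm}$ — and that this identification is compatible with the embeddings $\check\imath_m$, $\check\imath_{nm}$ into the respective (dual) Jacobians. This uses the identities \eqref{eq description of lambda_E}, \eqref{eq description of wt lambda} and \eqref{eq lambda_2 is the tautological} from the proof of Proposition \ref{pr description of Sigma_Phi}, which say that the tautological section on $\Sigma_m^\Ee$ is $\lambda_2$ and vanishes exactly on $\Xi_m^\Ee$. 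Once this is in place, the vanishing of $\hat\imath_m^*(\text{restriction of }\Lambda^1_m)$ is immediate from $\tau^\Ee_m{}^*\Lambda^1_{nm} = n\Lambda^1_m$ and the Lagrangianity of $\Xi^0_{nm}$; alternatively, if the pull-back argument proves delicate, one can argue directly that $\Xi_m^\Ee$ is, fibrewise over $b\in B'_m$, the image in $\Jac(S_{m,b})$ of the spectral curve $S_{m,b}$ under an Abel--Jacobi map twisted by the $2nm(g-1)$ points of $S_{\Ee\otimes\Ff}\cap(X\times\{0\})$, and invoke the standard fact that such twisted Abel--Jacobi images are isotropic for the canonical symplectic form on the total space of the Hitchin fibration — this is exactly the fibrewise computation carried out in Proposition \ref{prop adapted}, now reinterpreted on the dual side.
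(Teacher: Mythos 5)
The first half of your proposal (the multisection count) is fine and coincides with the paper's argument. The Lagrangianity argument, however, has two genuine problems. First, it is circular relative to the logical structure of the paper: you want to deduce that $\check{\imath}_{nm}(\Xi^0_{nm})$ is Lagrangian from the assertion that ``the Fourier--Mukai--Nahm formalism forces the transform of the Dirac--Higgs bundle to be a $\BAA$-brane,'' but nothing in Section \ref{sc mirror} forces this. Theorem \ref{tm FMN} only produces a relative skyscraper sheaf with a connection; to get flatness of the transformed connection one needs the input connection to be transversally flat (Lemma \ref{lm description of Nahm transform}), and in the Corollary to the Arinkin--Polishchuk theorem the Lagrangianity of the support is a \emph{hypothesis}, not a conclusion. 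Indeed, in the paper the implication runs in exactly the opposite direction: Theorem \ref{tm FMN of higher Nahm} invokes Theorem \ref{tm Xi^Ee Lagrangian} to verify the Poincar\'e-basic/transversally-flat conditions and only then concludes that the transform is a $\BAA$-brane. Second, the set-theoretic identification you rely on --- that $\hat{\imath}_m(-\Xi^\Ee_m)$ is the preimage under $\tau^\Ee_m$ of $\hat{\imath}_{nm}(\Xi^0_{nm})$ --- is unjustified and almost certainly false: by Proposition \ref{pr description of Sigma_Phi} the curves $S_{\Ee\otimes\Ff}$ are singular, so $\image(\tau^\Ee_m)$ lies outside $\M'_{nm}$ (the paper states this explicitly in the remark after Corollary \ref{co description of check Ee}), where $\hat{\imath}_{nm}$ is not even defined; and at the level of Jacobians $\hat{\tau}^\Ee_m$ is essentially a homomorphism of abelian varieties, whose fibres over Abel--Jacobi points are translates of its kernel rather than Abel--Jacobi curves. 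Your fallback appeal to Proposition \ref{prop adapted} also does not apply: that proposition shows the Hitchin \emph{fibres} are isotropic for the curvature form $\Theta$, which is a different $2$-form restricted to a different (in fact transverse) family of subvarieties.

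The paper's actual proof is a short, direct Liouville-form computation that bypasses all of this. One writes $\Lambda^1_m = d\theta$ for the canonical $1$-form $\theta(v)=\langle\varphi,\eta(v)\rangle$, translates $\theta$ through the spectral correspondence into the pairing with the tautological section $\lambda_b\in H^0(S_{m,b},K_{S_{m,b}})$, and checks via Serre duality (sum of residues) that $\hat{\imath}_m^*\theta|_s=\lambda_b(s)$, i.e.\ the pullback of $\theta$ along the Abel--Jacobi embedding is the tautological section of $\Tot(K_X)$. Since $\Xi^\Ee_m=\Sigma^\Ee_m\cap\bigl((X\times\{0\})\times B_m\bigr)$ lies in the zero section, $\lambda$ vanishes identically there, so $\theta$ restricts to zero on $\hat{\imath}_m(-\Xi^\Ee_m)$ and hence so does $\Lambda^1_m=d\theta$; half-dimensionality then gives Lagrangian. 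If you want to salvage your write-up, this tautological-section computation is the missing idea, and the identities \eqref{eq description of lambda_E}--\eqref{eq lambda_2 is the tautological} that you correctly flagged are used precisely to see that $\lambda$ vanishes on $\Xi^\Ee_m$ --- but they feed into the $\theta$-computation, not into a pullback along $\tau^\Ee_m$.
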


\begin{proof}
Recall the \'etale covering $\{ V_{m,i} \to B'_m  \}_{i \in I}$ of the locus of smooth spectral curves which is fine enough for the gerbe $\beta_m$. This induces an \'etale covering $\left \lbrace z_i : \M'_m \times_{B'} V_{m,i} \to \M'_m \right \rbrace_{i \in I}$. Since $\iI_{m}$ is a $\beta_m$-shifted embedding and $\Xi^\Ee_m$ is a $2nm(g-1)$-cover of $B'_m$, we have that $\iI_{m}(-\Xi^\Ee_m)$ is a $\beta_m$-shifted $2nm(g-1)$-section of $\M'_m \to B'_{m}$. It remains to proof that it is Lagrangian with respect to $\Lambda^1_{m,i} := z_i^*\Lambda^1_m$. 

Recalling Corollary \ref{co description of check Ee}, we now address the proof that
$\iI_m(-\Xi^\Ee_m)$ is Lagragian with respect to $\Lambda_m^1$. Recall from Section \ref{sc Mm hyperkahler}, that $\Lambda_m^1$ is defined as the exterior derivative $d\theta$ of a certain $1$-form $\theta$. Then, we see that $\imath_{m,i}(-\Xi^\Ee_{m,i})$ is Lagrangian with respect to $\Lambda^1_{m,i}$ if and only if $\theta_i := z_i^*\theta$ is a constant 1-form along $\imath_{m,i}(-\Xi^\Ee_{m,i})$ for all $i \in I$. Since $\imath_{m,i}$ is an embedding, it suffices to prove that $\imath_{m,i}^*\theta_i|_{-\Xi^\Ee_{m,i}}$ is constant.

We recall the definition of $\theta$. We recall that $\M'_m \subset \M^\st_m$ so all the points are smooth and represented by the stable Higgs bundle $\Ee = (E, \varphi)$, the tangent space is $T_{\Ee} \M'_m = \HH^1(C^\bullet_\Ee)$, which comes naturally equipped with the map $t: \HH^1(C^\bullet_\Ee) \to H^1(X,\End(E))$. By Serre duality, the Higgs field $\varphi \in H^0(\End(E) \otimes K_X)$ is an element of the dual space of $H^1(\End(E))$ and recall that we defined $\theta(v) = \langle \varphi, t(v) \rangle$, for each $v \in \HH^1(C^\bullet_\Ee)$.

We now study the description of $\theta_i$ over $\Jac_{V_{m,i}}^{\,
  \delta_m}(\Ss'_{m,i})$. By the spectral correspondence, given the
spectral data $L \to S_{m,b}$, one has that $E = (p_b)_*L$ and
$\varphi = (p_b)_*\lambda_b$ where $\lambda_b : L \to L \otimes
p_b^*K_X$ is given by tensoring by the restriction to $S_{m,b}$ of the
tautological section $\lambda : \Tot(K_X) \to p^*K_X$. Note that
$p_b^*K_X$ is a sub-sheaf of the canonical bundle $K_{S_{m,b}}$, then
$\lambda_b$ gives naturally an element of $H^0(S_{m,b},
K_{S_{m,b}})$. The isomorphism $\M'_{m,i} \cong \Jac_{V_{m,i}}^{\,
  \delta_m}(\Ss'_{m,i})^\vee \cong \Jac_{V_{m,i}}^{\,
  \delta_m}(\Ss'_{m,i})$, given by the push-forward under $p_b$ and
autoduality, provides as well the isomorphism between
$\HH^1(C^\bullet_\Ee)$ and $\Ext^1_{\Tot(K_X)}(L,L)$ and between
$H^1(X,\End(E))$ and $\Ext^1_{S_{m,b}}(L,L) \cong H^1(S_{m,b},
\Oo_{S_{m,b}})$. Then, we can express $\theta_i(v)$ to be $\langle
\lambda_b, t'(v) \rangle$ given by Serre duality, where now $v \in T_L
\Jac_{V_{m,i}}^{\, \delta_m}(\Ss'_{m,i}) \cong \Ext^1_{\Tot(K_X)}(L,
L)$, the section $\lambda_b \in H^0(S_{m,b}, K_{S_{m,b}})$ is defined
by the restriction of the tautological section $\lambda : \Tot(K_X)
\to p^*K_X$ to $S_{m,b} \subset \Tot(K_X)$, and 
\[
  t' : \Ext^1_{\Tot(K_X)}(L,L) \to \Ext^1_{S_{m,b}}(L,L) \cong
  H^1(S_{m,b}, \Oo_{S_{m,b}}) 
\]
is the projection to those deformations that preserve the support.

Note that, for every $L_1, L_2 \in \Jac(S_{m,b})$, one has naturally that 
\[
\Ext^1_{S_{m,b}}(L_1, L_1) \cong H^1(S_{m,b}, \Oo_{S_{m,b}}) \cong \Ext^1_{S_{m,b}}(L_2, L_2).
\]
We observe that $\theta_i$ is a 1-form which is constant along the fibres $\Jac^{\delta_m}(S_{m,b})$. On the other hand, the 1-form $\imath_{m,i}^*\theta_i$ in $\Ss'_{m,i}$ depends on the embedding $d\imath_{m,i} : T_s S_{m,b} \hookrightarrow T_{\Oo(s - s_i)}\Jac^{\delta_m}(S_{m,b}) \cong H^1(S_{m,b}, \Oo_{S_{m,b}})$. Recall that $\imath_{m,i}$ sends the point $s \in S_{m,b}$ to the line bundle whose meromorphic sections have pole at $s \in S_{m,b}$ and a zero at $s_i$. Since Serre duality $\langle \cdot, \cdot \rangle : H^0(K_{S_{m,b}}) \times H^1(\Oo_{S_{m,b}}) \to \CC$ sends $\langle \lambda , \xi \rangle$ to the sum of residues of the meromorphic differential $\lambda \xi$, one has that 
\[
\imath_{m,i}^* \theta_i |_s \cong \imath_{m,i}^* \langle \lambda_b , \cdot \rangle |_s \cong \lambda_b(s).
\]
So, $\imath_{m,i}^* \theta_i$ is the one form defined by the tautological section $\lambda : \Tot(K_X) \to p^*K_X$.

Obviously, the tautological section $\lambda$ restricted to $X \times
\{ 0\} \subset \Tot(K_X)$, is the zero section. Recall that we have
defined $-\Xi^\Ee_{m,i}$ as the intersection of $\Ss_{\Ee,i}$ and
$\Ss'_{m,i}$ inside $\Tot(K_X)$. But this is equivalent to the
intersection of $\Sigma_{m,i}^{\Ee^*}$ with $X \times \{ 0
\}$. Therefore, $\lambda$ is constantly $0$ along $-\Xi^\Ee_{m,i}$,
that is $\imath_{m,i}^* \theta_i|_{\Xi^\Ee_{m,i}} = 0$, and this
concludes the proof.
\end{proof}

We finish placing this statement in the context of mirror symmetry.

\begin{corollary} 
The $\beta_m$-shifted coherent sheaf $\widecheck{\Eee}'_m$ admits a
$\beta_m$-shifted $\BAA$-brane structure. 
\end{corollary}

%%%%%%%%%%%%%%%%%%%%%
% References
%%%%%%%%%%%%%%%%%%%%%

\end{document}